\documentclass[11pt]{article}
\usepackage[english]{babel}
\usepackage[utf8]{inputenc} 
\usepackage[T1]{fontenc} 
\usepackage{lmodern} 
\usepackage{amsmath}
\usepackage{amssymb}
\usepackage{amsthm}
\usepackage{hyperref}
\usepackage{graphicx}
\usepackage{titlesec}
\usepackage[font=footnotesize]{caption}
\usepackage[margin=3cm]{geometry}
\usepackage{enumitem}

\titleformat{\subsection}[runin]{\normalfont\itshape}{\thesubsection\hspace{8pt}}{3pt}{}[.] 

\newcommand\NN{\mathbb{N}} 
\newcommand\RR{\mathbb{R}} 
\newcommand\EE{\mathbb{E}} 
\newcommand\PP{\mathbb{P}} 
\newcommand\Law{\mathcal{L}} 
\newcommand\ind{\mathbf{1}} 
\newcommand\Z{\mathcal{Z}} 
\newcommand\E{\mathcal{E}}
\newcommand\ba{\mathbf{a}}
\newcommand\bb{\mathbf{b}}
\newcommand\I{\mathcal{I}}
\DeclareMathOperator{\var}{Var} 
\DeclareMathOperator{\supp}{supp} 
\newcommand\equald{\overset{\text{d}}{=}}

\newtheorem{theorem}{Theorem}
\newtheorem{lemma}[theorem]{Lemma}
\newtheorem{proposition}[theorem]{Proposition}
\newtheorem{definition}[theorem]{Definition}
\theoremstyle{definition}
\newtheorem{remark}[theorem]{Remark}

\begin{document}
	
	\title{Convergence and stationary distribution of Elo rating systems}
	
	\author{Roberto Cortez\footnote{Universidad Andres Bello, Departamento de Matemáticas, Sazié 2212, sexto piso, Santiago, Chile. E-mail: \texttt{roberto.cortez.m@unab.cl}.}
		\, and
		Hagop Tossounian\footnote{Universidad de Concepci\'{o}n, Departamento de Matem\'{a}tica, Avenida Esteban Iturra s/n, Barrio Universitario, Casilla 160-C, Concepci\'{o}n, Chile. E-mail: \texttt{htossounian@udec.cl}}}

	\maketitle
	
	\begin{abstract}
		The Elo rating system is a popular and widely adopted method for measuring the relative skill levels of players or teams in various sports and competitions. It assigns players numerical ratings and dynamically updates them based on game results and a model parameter $K$, which determines the sensitivity of rating changes. Assuming random games, this leads to a Markov chain for the evolution of the ratings of the $N$ players in the league. Despite its widespread use, little is known about the long-term behavior of this process. Aiming to fill this gap, in this article we prove that the process converges to its unique equilibrium distribution at an exponential rate in the 2-Wasserstein distance and almost surely. Moreover, we show important properties of the stationary distribution, such as the finiteness of an exponential moment, full support, and convergence to the players' true skills as $K$ decreases, at a rate of $\sqrt{K}$. We also provide Monte Carlo simulations that illustrate some of these properties and offer new insights.
	\end{abstract}
	
	\textbf{Keywords:} Elo rating, Elo system, sports rating, Markov chain, stationary distribution, Monte Carlo

	\section{Introduction}

	\subsection{Elo rating}
	
	The Elo rating system, developed by physicist Arpad Elo in 1959 \cite{elo1978}, is a method for rating players or teams in sports, gaming and other competitive settings. Originally intended for chess, it was adopted by the World Chess Federation (FIDE) in 1970. Today, the Elo rating system and its variants are widely used in many competitions, including Go, football (soccer), Scrabble, and esports like League of Legends and Counter-Strike. More recently, the system has been applied to the evaluation of large language models (LLMs) as a method for comparing the performance of different LLMs based on their ability to generate high-quality responses \cite{boubdir-etal-2023,gonzalezbustamante2024}.
	
	The system assigns players a numerical rating based on their game results. Typically, when two players compete, the winner gains points while the defeated player loses them, with the number of points exchanged depending on the difference between their ratings. A player defeating a higher-rated opponent gains more points, while winning against a lower-rated opponent yields less points. The system is dynamic, updating ratings after each game to reflect a player's performance and relative skill.
	
	Specifically, after a game between two distinct players $i$ and $j$, their respective ratings $x^i, x^j \in \RR$ are updated according to the rule
	\begin{equation}
		\label{eq:EloRule}
		\begin{split}
			x^i & \leftarrow x^i + K \cdot \{S^{ij}-b(x^i-x^j)\} \\
			x^j & \leftarrow x^j - K \cdot \{S^{ij}-b(x^i-x^j)\},
		\end{split}
	\end{equation}
	where:
	\begin{itemize}
		\item $S^{ij}$ is the \emph{score}, a random number representing the outcome of this particular game from the perspective of $i$, whereas $S^{ji} = -S^{ij}$ is the score of $j$. When the only possible outcomes are win or lose, then $S^{ij}=1$ if player $i$ wins, and $S^{ij}=-1$ if player $i$ loses. If a tie can also occur, it corresponds to $S^{ij}=0$. More general outcomes can be considered by allowing $S^{ij}$ to take any value in $[-1,1]$.
		
		\item $b(x^i-x^j)$ is the algorithm's prediction of the expected score of the match. Here $b: \RR \to (-1,1)$ is a given increasing odd function. A popular choice is the \emph{logistic}: for some scaling constant $c>0$,
		\begin{equation}
			\label{eq:logistic}
			b(u)
			= \tanh(cu).
		\end{equation}
		
		\item $K>0$ is a constant called the \emph{$K$-factor}, determining the sensitivity of rating changes. The theoretical maximum number of points a player can win or lose after a single match is $2K$.
		
	\end{itemize}
	
	Notice that, since $S^{ji} = -S^{ij}$ and $b(\cdot)$ is odd, the rule \eqref{eq:EloRule} is symmetric with respect to the indices $i$ and $j$. This rule is then applied iteratively, sampling distinct players $i, j \in \{1,\ldots,N\}$ uniformly at random for each match, where $N$ is the total number of players in the league. This gives rise to a Markov chain $(X_t)_{t \in \NN} = (X_t^1,\ldots,X_t^N)_{t\in\NN}$ on $\RR^N$, which we call the \emph{Elo process}, see Definition \ref{def:EloProcess} below. Notice that \eqref{eq:EloRule} generates a zero-sum model, that is, the sum of the ratings of the $N$ players is conserved.
	
	We remark that a different convention for the score is often preferred: either $S^{ij}=1$ or $S^{ij}=0$, representing a win or loss for player $i$, respectively. The function $b$ then takes values in $(0,1)$, and $b(x^i-x^j)$ is interpreted as the algorithm's predicted probability that player $i$ wins. However, since this is mathematically equivalent to an instance of the general case $S^{ij} \in [-1,1]$, which better reflects the symmetry of the model, in this article we adopt the latter convention, as in \cite{during-torregrossa-wolfram2019,during-fisher-wolfram2022,jabin-junca2015}.
	
	The main goal of the algorithm is to compute ratings that hopefully reflect the skill of each player. To make this more precise, we need some modeling assumptions. Specifically, for each $i \in \{1,\ldots,N\}$, one assumes that there is some fixed number $\rho^i \in \RR$ measuring the \emph{true skill} or \emph{intrinsic strength} of player $i$. Moreover, in order to relate $\rho^i$ to the player's actual performance, it is assumed that the score $S^{ij}$ of a game between $i\neq j$ satisfies
	\begin{equation}
		\label{eq:ESij}
		\EE[S^{ij}]
		= b(\rho^i - \rho^j).
	\end{equation}
	That is, the expected score of player $i$ against $j$ depends only on the difference of skills $\rho^i - \rho^j$ through the function $b(\cdot)$. In practice, the true skills vector $\rho = (\rho^1,\ldots,\rho^N)$ is not known, and one hopes the collection of ratings $X_t = (X_t^1,\ldots,X_t^N)$ to be a reasonable estimate when time $t$ is large.

	\subsection{Relevant literature, main results, and methodology}
	
	Despite its widespread use, there are only a handful of works dedicated to the rigorous study of the mathematical features of Elo rating systems, all of them relatively new. Many important questions have remained largely open, most notably the nature of the large-time convergence of the Elo process, and the properties of its equilibrium distribution $\pi$. To the best of our knowledge, the first mathematical developments are carried out in 2015 by Avdeev \cite{avdeev2015,avdeev2015b}, for the case $N=2$. The first study for general $N$ is Aldous's 2017 work \cite{aldous2017,aldous2017b}, where a proof of existence and convergence in distribution to $\pi$ is provided. Quantitative convergence results for some variants of the model are proven later in \cite{junca2021} by Junca and in the recent article \cite{oleskerTaylor-Zanetti2024} by Olesker-Taylor and Zanetti. These variants of the model introduce simplifying assumptions such as strong contractivity or compactness of the state space. Some numerical analysis is performed in \cite{krifa-spinelli-junca2021,manCastillo-junca2024,manCastillo-junca2024b}.
	
	It is worth mentioning the work by Jabin and Junca \cite{jabin-junca2015}, where the authors develop a mean-field approach to the model by considering its scaling limit when $N$ is large and $K$ small, leading to partial differential equations modeling the time evolution of the distribution of ratings. Moreover, in \cite{during-fisher-wolfram2022,during-torregrossa-wolfram2019}, the authors consider extensions of these mean-field equations that include \emph{variable strengths}---that is, $\rho^i$ and $\rho^j$ may change randomly after a game. This allows one to study learning effects, performance variability, and related phenomena. In the setting of the present article (without the mean-field assumption), this generalization would make the model significantly more difficult to analyze. Therefore, we consider only the already challenging case of constant $\rho$, in the hope that our work will serve as a foundation for the analysis of more complex and realistic variants of the model.
	
	The main goal of the present article is to provide stronger large-time convergence results for the Elo process, valid for fixed $N \in \NN$ and $K>0$ (no scaling limit) and without strong simplifying assumptions. Moreover, we will prove relevant properties of the equilibrium distribution $\pi$, such as finiteness of an exponential moment, behavior as $K\to 0$, and the extent of its support.
	
	We now present our main results and discuss their relevance. Let us introduce the following notation. We denote $\Vert \cdot \Vert$ and $|\cdot|_1$ the Euclidean (or 2-norm) and 1-norm on $\RR^N$, respectively. For $p\geq 1$, let $W_p(\cdot,\cdot)$ be the $p$-Wasserstein distance for probability measures on $\RR^N$ with respect to $\Vert \cdot \Vert$. Let $\Law(\cdot)$ denote the law of a random element. Finally, we let $\Z_N$ be the zero-sum subspace of $\RR^N$:
	\[
	\Z_N
	:= \left\{ x\in \RR^N : \textstyle\sum_{i=1}^N x^i = 0 \right\}.
	\]
	In all that follows, we assume conditions \ref{ass:first}-\ref{ass:last}, given below. Let $(X_t)_{t \in \NN}$ be the Elo process, specified in Definition \ref{def:EloProcess}, with random initial condition $X_0 \in \Z_N$. Our first main result is the following:
	
	\begin{theorem}[exponential convergence to the stationary distribution $\pi$]
		\label{thm:main:geometric_convergence}
		There exist constants $\theta_0>0$ and $C>0$, such that for any $\theta \in (0,\theta_0]$, there exists $\kappa \in (0,1)$ depending on $\theta$, such that
		\[
		W_2^2(\Law(X_t), \pi) \leq C  (1-\kappa)^t \EE\left[ (1+\Vert X_0 \Vert^2)  e^{\theta \Vert X_0 \Vert}  \right].
		\]
		In particular, this yields exponential convergence to $\pi$ at rate $1-\kappa$ not depending on $\Law(X_0)$, in the class of initial distributions satisfying $\EE[\Vert X_0 \Vert^2 e^{\theta_0 \Vert X_0 \Vert}] < \infty$.
	\end{theorem}
	
	The proof, provided at the end of Section \ref{sec:convergence_stationary}, relies on the well-known method of establishing a Foster--Lyapunov drift condition (see our key Lemmas \ref{lem:Ex<-delta} and \ref{lem:Foster-Lyapunov-drift}). We also make heavy use of an explicit estimate for a specific coupling between two realizations of the Elo process, called the \emph{natural coupling} (see Definition \ref{def:natural_coupling} and Lemma \ref{lem:coupling_non_increasing}), from which one can deduce contraction on compact sets; see Lemma \ref{lem:contraction_natural_coupling_compact}. Once these two properties are proven, one can apply existing results for general Markov chains to obtain exponential convergence to $\pi$ similar to Theorem \ref{thm:main:geometric_convergence}; see, for instance, \cite{qin-hobert2022}. Nevertheless, since in our setting obtaining this convergence requires little additional work---mainly establishing the contraction estimate of Lemma \ref{lem:contraction_estimate}---for the sake of completeness and the reader's convenience, we provide a full proof.
	
	Note that the bound provided by Theorem \ref{thm:main:geometric_convergence} depends exponentially on $X_0$, which might seem suboptimal. However, this is actually unavoidable: since the jumps are bounded, if the process is far from the origin, it takes a linear amount of time to come back to a compact set; this makes it impossible to obtain an upper bound like $\tilde{C} (1-\kappa)^t$ for $\tilde{C}$ depending subexponentially on $X_0$. This is in the same vein as general results on exponential convergence for Markov chains, where the constant may depend (possibly exponentially) on the initial state of the process; see, for instance, \cite{down-meyne-tweedie1995,qin-hobert2022}.
	
	In the course of proving Theorem \ref{thm:main:geometric_convergence}, we will obtain several relevant results. We establish the boundedness of an exponential moment of both the Elo process and its stationary distribution $\pi$; see Theorems \ref{thm:unif_moments} and \ref{thm:stationary}. This may explain why, in practice, Elo ratings appear to be bounded, even though, mathematically, they are not; see Lemma \ref{lem:elo^t}. This may also support the validity of contractive or compact variants of the model \cite{junca2021,oleskerTaylor-Zanetti2024}. In Theorem \ref{thm:normX-Yto0}, we use the explicit estimate provided in Lemma \ref{lem:coupling_non_increasing} to show that, for the natural coupling, the distance between the two processes decreases to $0$ a.s. We also prove convergence to $\pi$ in $W_p$, which seems to be novel in this setting; see \cite{oleskerTaylor-Zanetti2024} for a quantitative result in $W_2$ for a compact variant. See the details in Theorem \ref{thm:stationary}, which, for the sake of completeness, also provides a proof of Aldous's result about the existence of $\pi$ and weak convergence to it.
	
	Even though the Elo algorithm attempts to estimate the vector $\rho$ of true skills, the ratings themselves are \emph{biased}: for $X\sim \pi$, typically $\EE[X] \neq \rho$. In the literature, this seems to be a well-known fact, supported by numerical experiments \cite{manCastillo-junca2024,manCastillo-junca2024b}; we provide further numerical evidence in Section \ref{sec:numerics:expected_rating} (in contrast, see Proposition \ref{prop:EbXiXj} for the estimation of $b(\rho^i-\rho^j)$). Consequently, an important question is whether $\pi$ converges to the true skills $\rho$ as the $K$-factor decreases. This is the content of our second main result:
	
	\begin{theorem}[convergence of $\pi$ to the true skills]
		\label{thm:main:stationary_sqrtK}
		There exists a constant $C>0$ depending (increasingly) only on $\max_i |\rho^i|$, such that, for all $K$ sufficiently small, we have for $X\sim \pi$:
		\[
		\EE\left[\frac{1}{N} | X - \rho |_1 \right]
		\leq C \sqrt{K}.
		\]
	\end{theorem}
	
	Thus, $\pi$ converges to $\delta_\rho$ (the Dirac mass at $\rho$) as $K\to 0$ in $W_1$; see the precise statement in Theorem \ref{thm:stationary_sqrtK}. This also controls the bias $\EE[X]-\rho$ and the mean absolute deviation $\EE[|X-\EE[X]|_1]$. The convergence is of order $\sqrt{K}$, which was also observed numerically in \cite{manCastillo-junca2024,manCastillo-junca2024b} for the standard deviation of $X$ in the case of two players; see \cite{oleskerTaylor-Zanetti2024} for a bound of the same order in a compact setting. Moreover, in Section \ref{sec:numerics:expectation_K} we provide numerical results that agree with our estimate, which suggest that the order $\sqrt{K}$ is sharp for small $K$ but false away from $0$. We remark that our estimate is uniform in $N$, provided that $\max_i |\rho^i|$ stays bounded.
	
	Finally, we also prove that $\pi$ has full support on $\Z_N$, which seems to be completely novel as well; see Theorem \ref{thm:support}. Nevertheless, some numerical and mathematical analyses pointed to (at least) an unbounded support \cite{krifa-spinelli-junca2021}.

	\subsection{Assumptions and definitions}
	
	We now provide a precise mathematical definition of the Elo process $(X_t)_{t \in \NN}$. We will work under the following assumptions throughout this article:
	
	\begin{enumerate}[label=(A\arabic*)]
		\item \label{ass:first}
		\label{ass:ZN}
		The vectors of random initial ratings $X_0 = (X_0^1,\ldots,X_0^N)$ and true skills $\rho = (\rho^1, \ldots, \rho^N)$ belong to the subspace of zero-sum $\Z_N$.
		
		\item \label{ass:b}
		The function $b:\RR \to (-1,1)$ is odd, strictly increasing and $L$-Lipschitz continuous. Moreover, it's inverse is Lipschitz on compact sets; that is, if we define
		\begin{equation}
			\label{eq:ell}
			\ell_M
			:= \inf_{-M \leq v < u \leq M} \frac{b(u)-b(v)}{u-v},
		\end{equation}
		then $\ell_M>0$ for all $M>0$. Note that $\ell_M$ is non-increasing as a function of $M$, and $\ell_M \to 0$ when $M\to\infty$, because $b(\cdot)$ is bounded.
		
		\item \label{ass:Sij}
		For every distinct $i,j \in \{1, \ldots, N\}$, denote $S^{ij}$ the random variable on the interval $[-1,1]$ corresponding to the score of a match between players $i$ and $j$. We assume \eqref{eq:ESij}, that is, $\EE[S^{ij}] = b(\rho^i - \rho^j)$. We also assume that $S^{ji} = - S^{ij}$, which is of course compatible with $b(\cdot)$ being odd.
		
		\item \label{ass:KL}
		The $K$-factor and Lipschitz constant $L$ satisfy $KL<1$.
		
		\item \label{ass:suppS}
		$\supp(S^{ij}) \supseteq \{-1,1\}$ for every distinct $i,j$. That is, $\PP(S^{ij} < -1+\epsilon) > 0$ and $\PP(S^{ij} > 1-\epsilon) > 0$ for all $\epsilon>0$.
		
		\label{ass:last}
	\end{enumerate}
	
	Some remarks about these assumptions:
	
	\begin{itemize}
		\item Assumption \ref{ass:ZN} is of course very natural, since the system preserves the sum of the ratings. It implies $X_t \in \Z_N$ for all $t\in \NN$.
		
		\item In assumption \ref{ass:b}, the condition $\ell_M > 0$ for all $M>0$ is quite mild. For instance, it is satisfied by any $C^1$ function $b(\cdot)$ with $b' > 0$, and by any odd function $b(\cdot)$ concave on $[0,\infty)$. In the latter case, which includes the logistic function \eqref{eq:logistic} and many others, we have $\ell_M = b'(M)$.
		
		\item
		The symmetry conditions of $b(\cdot)$ being odd in \ref{ass:b} and $S^{ji} = - S^{ij}$ in \ref{ass:Sij} do not entail a loss of generality: if the scores $\tilde{S}^{ij}$ and function $\tilde{b}(\cdot)$ do not satisfy these conditions, then one can work with $S^{ij} := B \tilde{S}^{ij} - (1-B)\tilde{S}^{ji}$, where $B \sim \text{Bernoulli}(1/2)$, independent of everything else, with corresponding odd function $b(u) = \frac{1}{2}(\tilde{b}(u)-\tilde{b}(-u))$. One reason for dropping these assumptions would be to include \emph{home-field advantage} in the model: the expected score of $i$ as a home team, i.e.\ $\EE[\tilde{S}^{ij}]$, is larger than the expected score of $i$ as an away team, i.e.\ $\EE[-\tilde{S}^{ji}]$. When the score takes values in $\{-1,1\}$, it is customary to achieve this by working with $\tilde{b}(\cdot) = b(\cdot + c)$ for some constant $c>0$, see for instance \cite{langville-Meyer2012,sismanis2010}.
		
		\item The condition $KL<1$ in \ref{ass:KL} is ubiquitous in the literature \cite{aldous2017,avdeev2015,avdeev2015b,junca2021,krifa-spinelli-junca2021,oleskerTaylor-Zanetti2024}. It means that $K$, the scale of the changes in rating after each game, is smaller than $1/L$, the scale of the ratings themselves. In practice, one works with $K \ll 1/L$, so it is not a restrictive assumption. For instance, typical values of $2K$ used in chess range from 10 to 40, whereas $1/L \sim 1000$.
		
		\item
		Assumption \ref{ass:suppS} implies that $S^{ij}$ can attain values close to $\pm 1$ with positive probability. It allows the rating of any given player to become unbounded. It will be required to prove that $\supp(\pi) = \Z_N$. This condition is easy to satisfy: if, for example, $\supp(S^{ij}) = [-c,c]$ for some $c<1$, we can simply work with the re-scaled random variable $S^{ij}/c$ instead.
	\end{itemize}

	\begin{definition}
		\label{def:EloProcess}
		Assume \ref{ass:first}-\ref{ass:last}. The \emph{Elo process} with parameters $\rho \in \Z_N$, $b(\cdot)$, $(\Law(S^{ij}))_{i\neq j}$ and $K>0$, is the Markov chain $(X_t)_{t\in\NN}$ on $\Z_N \subseteq \RR^N$, starting from a given random vector $X_0 \in \Z_N$, with the following dynamics: at step $t$,
		\begin{enumerate}
			\item select an ordered pair $(I_t, J_t) \in \{1,\ldots,N\}^2$ of two distinct players, uniformly at random;
			
			\item given the values $i=I_t$, $j=J_t$, sample a score random variable $S_t^{ij}$ which is an independent copy of $S^{ij}$.
			
			\item update the coordinates $i$ and $j$ of $X_t$:
			\begin{align*}
				X_{t+1}^i
				&= X_t^i + K \{S^{ij}_t - b(X_t^i - X_t^j)\} \\
				X_{t+1}^j
				&= X_t^j - K \{S^{ij}_t - b(X_t^i - X_t^j)\}.
			\end{align*}
		\end{enumerate}
	\end{definition} 
	
	\begin{remark}
		\label{rmk:xIJ}
		Equivalently, when the process is at state $x \in \Z_N$, it is updated to
		\begin{equation}
			\label{eq:xIJ}
			x + K \{S^{IJ} - b(x^I - x^J)\} (e_I - e_J),
		\end{equation}
		where $e_i$ is the $i$-th canonical vector of $\RR^N$, the pair $(I,J)$ is chosen uniformly at random among all distinct pairs, and, conditionally on $I=i, J=j$, the variable $S^{IJ}$ is an independent copy of $S^{ij}$.    
	\end{remark}
	
	There is an obvious coupling between two Elo processes, given by the following Definition. This coupling was already used by Aldous \cite{aldous2017b}, and also by Olesker-Taylor and Zanetti \cite{oleskerTaylor-Zanetti2024}. We will take advantage of it throughout the present work.
	
	\begin{definition}
		\label{def:natural_coupling}
		Given two arbitrary random initial conditions $X_0, Y_0 \in \Z_N$, the \emph{natural coupling} $(X_t,Y_t)_{t\in \NN}$ is the process obtained by using exactly the same randomness at each step for both $(X_t)_{t\in \NN}$ and $(Y_t)_{t\in \NN}$. That is, at each step $t\in\NN$, both processes use the same pair of players $i=I_t$, $j=J_t$ and the same realization of the score random variable $S_t^{ij}$.
	\end{definition}

	\subsection{Plan of the paper}
	
	The remainder of this article is structured as follows. In Section \ref{sec:Elo_process}, we study some mathematical properties of the Elo process $(X_t)_{t\in\NN}$, especially for large $t$. Section \ref{sec:convergence_stationary} is devoted to the convergence of the process to its stationary distribution $\pi$. We study some of the properties of $\pi$ in Section \ref{sec:properties_stationary}. In Section \ref{sec:numerics}, we present some Monte Carlo simulations that illustrate relevant features of $\pi$. Finally, in Section \ref{sec:conclusion}, we provide some concluding remarks and mention possible lines of future research.
	
	For the reader's convenience, let us mention our overall strategy and the dependence structure among the main results:
	\begin{itemize}
		\item Our main tools are an explicit estimate for the natural coupling (Lemma \ref{lem:coupling_non_increasing}) and a Foster--Lyapunov drift condition (Lemmas \ref{lem:Ex<-delta} and \ref{lem:Foster-Lyapunov-drift}).
		
		\item These tools lead to a uniform bound of some exponential moments (Theorem \ref{thm:unif_moments}), and then a.s.\ convergence to $0$ for the natural coupling (Theorem \ref{thm:normX-Yto0}).
		
		\item This leads to a proof of the existence of $\pi$, convergence to it, and finiteness of an exponential moment (Theorem \ref{thm:stationary}).
		
		\item The previous results allow us to derive a contraction estimate (Lemma \ref{lem:contraction_estimate}), which then leads to exponential convergence (Theorem \ref{thm:main:geometric_convergence}).
		
		\item The analysis of the properties of $\pi$ (convergence as $K\to 0$ in Theorem \ref{thm:stationary_sqrtK} and full support in Theorem \ref{thm:support}) is largely independent of the rest.
		
	\end{itemize}

	\section{The Elo process}
	\label{sec:Elo_process}
	
	In order to state the next result, let us fix some notation. Given the indices of two distinct players $i,j \in \{1,\ldots,N\}$ and a (non-random) number $s \in [-1,1]$, denote $\alpha = (i,j,s)$. Given a fixed such $\alpha$ and some $x \in \RR^N$, denote $\E_\alpha(x)$ the vector obtained after a single step of the Elo algorithm corresponding to the information in $\alpha$. That is,
	\begin{equation}
		\label{eq:E_alpha}
		\E_\alpha(x)
		= x + K\{s-b(x^i-x^j)\}(e_i - e_j).
	\end{equation}
	The following is a simple yet extremely useful estimate. A similar computation is performed in \cite{oleskerTaylor-Zanetti2024}.
	
	\begin{lemma}
		\label{lem:coupling}
		For any vectors $x,y \in \RR^N$, any distinct $i,j \in \{1,\ldots,N\}$ and any $s \in [-1,1]$, for $\alpha=(i,j,s)$ we have
		\begin{align*}
			& \left\Vert \E_\alpha(x) - \E_\alpha(y) \right\Vert^2 - \left\Vert x - y \right\Vert^2 \\
			&\leq {} - 2K (1-KL) \left| x^i-x^j - y^i+y^j \right| \left| b(x^i-x^j)-b(y^i-y^j) \right|.
		\end{align*}
	\end{lemma}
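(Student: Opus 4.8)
The plan is to compute $\|\E_\alpha(x)-\E_\alpha(y)\|^2 - \|x-y\|^2$ directly. Only the $i$-th and $j$-th coordinates of $x$ and $y$ are modified by $\E_\alpha$, so the difference of squared norms only involves those two coordinates. Write $d^i = x^i - y^i$ and $d^j = x^j - y^j$ for the original coordinate gaps, and let $g(x) = s - b(x^i-x^j)$, $g(y) = s - b(y^i-y^j)$. After one step, the new gaps are $d^i + K(g(x)-g(y))$ in coordinate $i$ and $d^j - K(g(x)-g(y))$ in coordinate $j$. Hence, setting $\delta := K(g(x)-g(y)) = -K\{b(x^i-x^j) - b(y^i-y^j)\}$, we get
\[
\|\E_\alpha(x)-\E_\alpha(y)\|^2 - \|x-y\|^2 = (d^i+\delta)^2 + (d^j-\delta)^2 - (d^i)^2 - (d^j)^2 = 2\delta(d^i - d^j) + 2\delta^2.
\]

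**Key algebraic step.**

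Now substitute back. Writing $u = x^i - x^j$, $v = y^i - y^j$, we have $d^i - d^j = u - v$ and $\delta = -K\{b(u)-b(v)\}$. Therefore the right-hand side becomes
\[
-2K(u-v)\{b(u)-b(v)\} + 2K^2\{b(u)-b(v)\}^2.
\]
Since $b$ is increasing, $(u-v)$ and $b(u)-b(v)$ have the same sign, so $(u-v)\{b(u)-b(v)\} = |u-v|\,|b(u)-b(v)|$. Moreover $b$ is $L$-Lipschitz, so $|b(u)-b(v)| \le L|u-v|$, whence $\{b(u)-b(v)\}^2 \le L|u-v|\,|b(u)-b(v)|$. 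Combining these two facts:
\[
-2K|u-v|\,|b(u)-b(v)| + 2K^2\{b(u)-b(v)\}^2 \le -2K|u-v|\,|b(u)-b(v)| + 2K^2 L |u-v|\,|b(u)-b(v)|,
\]
which equals $-2K(1-KL)|u-v|\,|b(u)-b(v)|$. Recalling $u - v = x^i - x^j - y^i + y^j$ gives exactly the claimed bound.

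**Remarks on difficulty.**

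There is essentially no obstacle here: the argument is a short, self-contained computation. The only points requiring a moment's care are (a) confirming that $\E_\alpha$ leaves all coordinates other than $i$ and $j$ untouched, so that the difference of squared Euclidean norms telescopes to just the $i,j$ terms; and (b) the sign bookkeeping, i.e.\ that monotonicity of $b$ turns the cross term into an absolute value with the correct (negative) sign, and that the Lipschitz bound is applied to one factor of the square while keeping the other factor as $|b(u)-b(v)|$ rather than overestimating it by $L|u-v|$ as well — doing the latter would still give a valid but weaker bound, and keeping the asymmetric form is what yields the clean statement. The assumption $KL < 1$ from \ref{ass:KL} is exactly what makes the coefficient $1-KL$ positive, so the estimate genuinely shows the coupled distance is non-increasing under any single step.
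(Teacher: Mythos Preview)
Your proof is correct and follows essentially the same approach as the paper: compute the difference of squared norms using that only coordinates $i$ and $j$ change, obtain the expression $-2K(u-v)\{b(u)-b(v)\} + 2K^2\{b(u)-b(v)\}^2$, then apply monotonicity of $b$ to insert absolute values in the cross term and the Lipschitz bound to one factor of the square. The paper's notation differs slightly (it writes $\Delta_{ij}$ for your $b(u)-b(v)$ and expands the squares coordinate by coordinate rather than via your compact $(d^i+\delta)^2 + (d^j-\delta)^2$ identity), but the logic is identical.
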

	
	\begin{proof}
		Calling $\E_\alpha^i(x)$ the $i$-th entry of the vector $\E_\alpha(x) \in \RR^N$, clearly we have:
		\begin{align*}
			\E_\alpha^i(x) - \E_\alpha^i(y)
			&= x^i + K\{s-b(x^i-x^j)\} - y^i - K\{s-b(y^i-y^j)\} \\
			&= (x^i - y^i) - K\{b(x^i-x^j) -b(y^i-y^j)\}.
		\end{align*}
		To shorten the notation, call $\Delta_{ij} = b(x^i-x^j) -b(y^i-y^j)$. Thus:
		\[
		(\E_\alpha^i(x) - \E_\alpha^i(y))^2
		= (x^i - y^i)^2
		- 2K(x^i - y^i)\Delta_{ij}
		+ K^2 \Delta_{ij}^2.
		\]
		A similar computation for $j$ yields the analogous identity, but with a plus sign in front of the cross term:
		\[
		(\E_\alpha^j(x) - \E_\alpha^j(y))^2
		= (x^j - y^j)^2
		+ 2K(x^j - y^j)\Delta_{ij}
		+ K^2 \Delta_{ij}^2.
		\]
		Notice that the vectors $\E_\alpha(x) - \E_\alpha(y)$ and $x-y$ differ only in their $i$-th and $j$-th entries. Thus, from the last two identities, we obtain
		\begin{align*}
			&\left\Vert \E_\alpha(x) - \E_\alpha(y) \right\Vert^2
			- \left\Vert x - y \right\Vert^2 \\
			&= (\E_\alpha^i(x) - \E_\alpha^i(y))^2
			+ (\E_\alpha^j(x) - \E_\alpha^j(y))^2
			- (x^i - y^i)^2
			- (x^j - y^j)^2 \\
			&= - 2K(x^i - x^j - y^i + y^j) \Delta_{ij}
			+ 2K^2 \Delta_{ij}^2 \\
			&\leq - 2K(x^i - x^j - y^i + y^j) \Delta_{ij}
			+ 2K^2 L |x^i - x^j - y^i + y^j | |\Delta_{ij}|,
		\end{align*}
		where in the last step we used that the function $b$ is $L$-Lipschitz. Moreover, since $b$ is increasing, $\Delta_{ij}$ has the same sign as $x^i - x^j - y^i + y^j$, and then we can insert absolute values to the factors in the first term of the last line. The desired estimate follows.
	\end{proof}

	As an immediate consequence of Lemma \ref{lem:coupling}, we have the following extremely useful result:
	
	\begin{lemma}
		\label{lem:coupling_non_increasing}
		The natural coupling $(X_t,Y_t)_{t\in \NN}$ almost surely satisfies for all $t\in\NN$,
		\begin{equation}
			\label{eq:normX-Ynon_increasing}
			\begin{split}
				& \left\Vert X_{t+1} - Y_{t+1} \right\Vert^2 - \left\Vert X_t - Y_t \right\Vert^2 \\
				&\leq {} - 2K (1-KL) \left| X_t^{I_t}-X_t^{J_t} - Y_t^{I_t}+Y_t^{J_t} \right| \left| b(X_t^{I_t}-X_t^{J_t})-b(Y_t^{I_t}-Y_t^{J_t}) \right|,
			\end{split}
		\end{equation}
		where $(I_t,J_t)$ is the (random) pair of players involved in the $(t+1)$-th match. Consequently, since we are assuming $KL < 1$, we have that $\Vert X_t - Y_t \Vert$ is almost surely non-increasing.
	\end{lemma}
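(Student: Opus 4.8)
The plan is to obtain \eqref{eq:normX-Ynon_increasing} as an immediate pathwise consequence of Lemma~\ref{lem:coupling}, so that essentially all of the work has already been done. The first step is to recognize that the one-step transition of the natural coupling is exactly the deterministic map $\E_\alpha$ of \eqref{eq:E_alpha} applied to both coordinates with the \emph{same} $\alpha$. Concretely, condition on the outcome of step $t$: let $i=I_t$, $j=J_t$ be the selected ordered pair, let $s=S_t^{ij}$ be the sampled score, and put $\alpha_t=(i,j,s)$. Then, by Definition~\ref{def:natural_coupling} together with Remark~\ref{rmk:xIJ}, we have $X_{t+1}=\E_{\alpha_t}(X_t)$ and $Y_{t+1}=\E_{\alpha_t}(Y_t)$, and these identities hold for each realization of $(I_t,J_t,S_t^{ij})$, not merely in distribution.

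With this identity in hand, I would simply invoke Lemma~\ref{lem:coupling}, which is a deterministic estimate valid for all $x,y\in\RR^N$, all distinct indices, and all $s\in[-1,1]$. Applying it with $x=X_t$, $y=Y_t$ and $\alpha=\alpha_t=(I_t,J_t,S_t^{I_t J_t})$ yields precisely \eqref{eq:normX-Ynon_increasing} for that fixed $t$, on the full-measure event where $S_t^{I_t J_t}\in[-1,1]$. Since this holds for every $t\in\NN$ and $\NN$ is countable, intersecting the corresponding full-measure events produces a single probability-one event on which \eqref{eq:normX-Ynon_increasing} holds simultaneously for all $t$, which is the asserted almost-sure statement.

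For the final consequence, assumption~\ref{ass:KL} gives $KL<1$, so the prefactor $2K(1-KL)$ is strictly positive, while the product of the two absolute values on the right-hand side of \eqref{eq:normX-Ynon_increasing} is non-negative; hence the right-hand side is $\le 0$, i.e.\ $\Vert X_{t+1}-Y_{t+1}\Vert^2\le\Vert X_t-Y_t\Vert^2$ almost surely, for all $t$. Since $u\mapsto\sqrt{u}$ is non-decreasing on $[0,\infty)$ and all the quantities involved are non-negative, this transfers to $\Vert X_{t+1}-Y_{t+1}\Vert\le\Vert X_t-Y_t\Vert$, so $t\mapsto\Vert X_t-Y_t\Vert$ is non-increasing almost surely. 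There is no genuine obstacle in this argument: the whole content sits in Lemma~\ref{lem:coupling}, and the only point to keep track of is the harmless quantifier bookkeeping above (moving the ``for all $t$'' quantifier inside the almost-sure statement), which is legitimate precisely because there are only countably many steps.
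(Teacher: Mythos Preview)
Your proposal is correct and matches the paper's approach exactly: the paper presents this lemma as an immediate consequence of Lemma~\ref{lem:coupling}, and your write-up simply makes explicit the pathwise identification $X_{t+1}=\E_{\alpha_t}(X_t)$, $Y_{t+1}=\E_{\alpha_t}(Y_t)$ and the countable-intersection bookkeeping. There is nothing to add.
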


	The following estimate is essential. It will allow us to prove a Foster--Lyapunov drift condition for the Elo process. In what follows, for any $x\in \Z_N$, we denote $\EE_x$ the expectation conditional on $X_0 = x$. Also note that, since the ratings change by at most $2K$, the norm of the jumps of the process are a.s.\ bounded by $\sqrt{8}K$; we will use this fact frequently.
	
	\begin{lemma}
		\label{lem:Ex<-delta}
		There exist constants $R_0>0$ and $\delta>0$, such that for all $x \in \Z_N$ with $\Vert x \Vert > R_0$,
		\[
		\EE_x[X_1 - x] \cdot x \leq - \delta \Vert x \Vert.
		\]
	\end{lemma}
	
	\begin{proof}
		Given $x \in \Z_N$, let $i_* \neq j_*$ be given by $i_* = \operatorname{argmax}_i(x^i - \rho^i)$ and $j_* = \operatorname{argmin}_j (x^j -\rho^j)$. Both depend on $x$.
		Since $\sum_i x^i = 0$, taking $\Vert x\Vert$ large enough, we can force $x^{i_*}-x^{j_*}$ to be as large as we want, or equivalently, $b(x^{i_*} - x^{j_*})$ as close to 1 as we want. More specifically: defining
		\[
		\eta
		= 1 - \max_{i,j} b(\rho^i-\rho^j)
		>0,
		\]
		there exists a constant $A>0$ depending only on $b$ and the vector $\rho$, such that
		\begin{equation}
			\label{eq:brhoij-bxij}
			|b(\rho^{i_*} - \rho^{j_*}) - b(x^{i_*}-x^{j_*})|
			\geq \frac{\eta}{2},
			\qquad \text{when $\Vert x \Vert > A$}.    
		\end{equation}
		Also, since $\sum_i (x^i-\rho^i) = 0$, then $x^{i_*} - \rho^{i_*}$ and $\rho^{j_*}-x^{j_*}$ are non-negative, and one of them attains the maximum of $|x^i - \rho^i|$. Consequently,
		\begin{equation}
			\label{eq:xij-rhoij_norm_infty}
			x^{i_*}-x^{j_*} - (\rho^{i_*} - \rho^{j_*})
			\geq \Vert x-\rho\Vert_\infty \geq N^{-1/2} \Vert x-\rho\Vert.
		\end{equation}
		Call $\Delta = X_1 -x$. From \eqref{eq:xIJ}, we have
		\begin{align*}
			\EE_x [\Delta] \cdot (x-\rho) 
			&= \frac{K}{N(N-1)} \sum_{i,j=1}^N
			\EE [ S^{ij}-b(x^i-x^j)](e_i - e_j) \cdot (x-\rho)  \\
			&= \frac{K}{N(N-1)} \sum_{i,j=1}^N (b(\rho^i -\rho^j)-b(x^i - x^j))(x^i -\rho^i- x^j + \rho^j),
		\end{align*}
		where we have used the equality $\EE[S^{ij}]=b(\rho^i - \rho^j)$. Since $b(\cdot)$ is increasing, all the terms are non-positive. Discarding all but one of the terms, we get
		\begin{align*}
			\EE_x [\Delta ] \cdot (x-\rho)
			& \leq -\frac{K}{N(N-1)} \vert b(x^{i_*}-x^{j_*})- b(\rho^{i_*}-\rho^{j_*})\vert \vert x^{i_*}-x^{j_*}- (\rho^{i_*}-\rho^{j_*}) \vert \\
			& \leq -\frac{\eta}{2}\frac{K}{N(N-1)} \vert x^{i_*}-x^{j_*}- (\rho^{i_*}-\rho^{j_*})\vert \\
			& \leq -\frac{\eta K}{2 N^{3/2}(N-1)}\Vert x-\rho\Vert=: -2\delta \Vert x-\rho\Vert,
		\end{align*}
		provided $\Vert x\Vert \geq A$, where we have used \eqref{eq:brhoij-bxij} and \eqref{eq:xij-rhoij_norm_infty}. Since $\Vert \Delta \Vert \leq \sqrt{8}K$ a.s., we have
		\begin{align*}
			\EE_x[\Delta] \cdot \frac{x}{\Vert x \Vert}
			&= \EE_x[\Delta] \cdot \frac{(x-\rho)}{\Vert x \Vert} + \EE_x[\Delta] \cdot \frac{\rho}{\Vert x \Vert} \\
			&\leq -2\delta \frac{\Vert x-\rho\Vert}{\Vert x\Vert} + \sqrt{8}K \frac{\Vert \rho \Vert}{\Vert x\Vert}
		\end{align*}
		whenever$\Vert x \Vert > A$. By taking $\Vert x\Vert > R_0$ for $R_0$ large enough, the above expression becomes $\leq -\delta$; for example, $R_0:=\max\{A, \Vert \rho \Vert (2 + \sqrt{8}K/\delta)\}$ suffices. The result is proven.
	\end{proof}

	Together with Lemma \ref{lem:Ex<-delta}, the following is our central estimate. It establishes a Foster--Lyapunov drift condition for the Elo process, allowing us to prove contraction and uniform exponential moments. In what follows, given $\theta >0$, we denote
	\[
	V_\theta(x) := e^{\theta \Vert x \Vert}.
	\]
	
	\begin{lemma}[Foster--Lyapunov drift condition]
		\label{lem:Foster-Lyapunov-drift}
		There exist a constant $\theta_0>0$ with the following property: for any $\theta\in(0,\theta_0]$, there exist $\lambda \in (0,1)$, $R_1>0$ and $c>0$ (depending on $\theta$), such that for any $x \in \Z_N$,
		\[
		\EE_x[V_\theta(X_1)]
		\leq \lambda V_\theta(x) + c \ind_{\Vert x \Vert \leq R_1}.
		\]
	\end{lemma}
	
	\begin{proof}
		The proof is standard for any Markov process with bounded jumps satisfying an inequality as the one in Lemma \ref{lem:Ex<-delta}. Call $\Delta = X_1 - x$. Thus:
		\[
		\Vert X_1 \Vert
		= (\Vert x \Vert^2 + \Vert \Delta \Vert^2 + 2 \Delta \cdot x)^{1/2}
		\leq \Vert x \Vert + \frac{\Vert \Delta \Vert^2}{2 \Vert x \Vert} + \Delta \cdot \frac{x}{\Vert x \Vert},
		\]
		where we have used Bernoulli's inequality. Recall that $\Vert \Delta \Vert^2 \leq 8K^2$ a.s. Then:
		\[
		\EE_x[V_\theta(X_1)]
		\leq e^{\theta \Vert x \Vert} e^{\frac{4 \theta K^2}{\Vert x \Vert}} \EE_x[e^{\theta \Delta \cdot \frac{x}{\Vert x \Vert}}].
		\]
		Recall Hoeffding's lemma: one has $\EE[e^{\theta Z}] \leq e^{\theta \EE[Z] + \frac{\theta^2(c_1-c_0)^2}{8}}$ for any random variable $Z$ such that $c_0 \leq Z \leq c_1$ a.s. Using this with $Z = \Delta \cdot \frac{x}{\Vert x \Vert} \in [-\sqrt{8}K, \sqrt{8}K]$, we obtain:
		\[
		\frac{\EE_x[V_\theta(X_1)]}{V_\theta(x)}
		\leq e^{\frac{4 \theta K^2}{\Vert x \Vert}} e^{\theta \EE_x[Z] + 4 \theta^2 K^2}
		\leq e^{\frac{4 \theta K^2}{\Vert x \Vert} -  \theta \delta + 4\theta^2 K^2}
		\]
		when $\Vert x \Vert > R_0$, where we have used Lemma \ref{lem:Ex<-delta}. The last expression can be made smaller than $1$ uniformly in $x$: set $\theta_0 = \frac{\delta}{16 K^2}$, so that for $\theta \in (0,\theta_0]$ and $\Vert x \Vert > 1/\theta$, we have
		\[
		\frac{4 \theta K^2}{\Vert x \Vert} -  \theta \delta + 4\theta^2 K^2
		\leq 8 \theta^2 K^2 -  \theta \delta
		\leq - \frac{\theta \delta}{2}.
		\]
		Thus, we have shown that $\EE[V_\theta(X_1)] \leq \lambda V_\theta(x)$ when $\Vert x \Vert > R_1 := \max(R_0,1/\theta)$, for $\lambda := e^{-\theta \delta /2}$. On the other hand, when $\Vert x \Vert \leq R_1$, we have $\EE_x[V_\theta(X_1)] \leq c := V_\theta(R_1+\sqrt{8}K)$.
	\end{proof}

	As a consequence of Lemma \ref{lem:Foster-Lyapunov-drift}, the next result shows that the system $(X_t)_{t\in \NN}$ has bounded exponential moment of some small order, uniformly in $t$.

	\begin{theorem}[uniform moments]
		\label{thm:unif_moments}
		Let $\phi(x) = \Vert x \Vert^p e^{\theta \Vert x \Vert}$ for some $p \in [0,\infty)$ and $\theta \in [0,\theta_0]$, where $\theta_0$ is the constant provided by Lemma \ref{lem:Foster-Lyapunov-drift}. If $\EE[\phi(X_0)] < \infty$, then
		\[
		\sup_{t \in \NN} \EE[\phi(X_t)]
		< \infty.
		\]
	\end{theorem}
	
	\begin{proof}
		First, consider the case $p=0$, thus $\phi(x)= e^{\theta \Vert x\Vert} = V_\theta(x)$. Denote $q_t = \EE[\phi(X_t)]$. From Lemma \ref{lem:Foster-Lyapunov-drift}, we obtain $q_{t+1} \leq \lambda q_t + c$. Iterating this yields
		\[
		q_t
		\leq \lambda^t q_0 + c \sum_{k=0}^{t-1} \lambda^k
		\leq \lambda^t q_0 + \frac{c}{1-\lambda}, 
		\]
		thus $\sup_t q_t < \infty$.
		
		Next, consider the case $\theta=0$, thus $\phi(x) = \Vert x \Vert^p$. Starting from $Y_0 \equiv 0$, define $(X_t, Y_t)_{t\in\NN}$ as the natural coupling described in Definition \ref{def:natural_coupling}. Since $KL \leq 1$, we know that $\Vert X_t - Y_t \Vert$ is a.s.\ non-increasing (Lemma \ref{lem:coupling_non_increasing}), thus $\Vert X_t - Y_t \Vert \leq \Vert X_0 - Y_0 \Vert = \Vert X_0 \Vert$. Therefore,
		\begin{align*}
			\Vert X_t \Vert^p
			&\leq C_p \Vert X_t - Y_t \Vert^p + C_p \Vert Y_t \Vert^p \\
			&\leq C_p \Vert X_0 \Vert^p + C_p \Vert Y_t \Vert^p.
		\end{align*}
		Note that $\sup_t \EE[\Vert Y_t \Vert^p] < \infty$, thanks to the first case. Consequently, $\sup_t \EE [\Vert X_t \Vert^p] < \infty$, as desired.
		
		Now, consider the case $p \in (0,1]$, $\theta \in (0,\theta_0]$. Given that $X_0 = x$, recall that $\Vert X_1 - x \Vert$ is a.s.\ bounded by $M:=\sqrt{8}K$. Using Lemma \ref{lem:Foster-Lyapunov-drift} again, we have:
		\begin{align*}
			\EE_x[ \phi(X_1)]
			& \leq (\Vert x \Vert + M)^p \EE_x[e^{\theta \Vert X_1 \Vert}] \\
			& \leq (\Vert x \Vert^p + M^p) (\lambda e^{\theta \Vert x \Vert} + c) \\
			& = \lambda \phi(x) + c \Vert x \Vert^p + \lambda M^p e^{\theta \Vert x \Vert} + c M^p.
		\end{align*}
		Thanks to the previous cases, we know that $\EE[\Vert X_t \Vert^p]$ and $\EE[e^{\theta \Vert X_t \Vert}]$ are finite, uniformly in $t$. Thus, denoting $q_t = \EE[\phi(X_t)]$, the previous inequality yields $q_{t+1} \leq \lambda q_t + \tilde{c}$. Arguing as in the first case, we deduce $\sup_t q_t < \infty$.
		
		Finally, the case $p\in(1,\infty)$, $\theta \in (0,\theta_0]$ can be deduced similarly by noting that
		\[
		\EE_x[ \phi(X_1)]
		\leq \left(\Vert x \Vert^p + M^p + C_p M \Vert x \Vert^{p-1} + C_p M^{p-1} \Vert x \Vert \right)
		\EE_x[e^{\theta \Vert X_1 \Vert}]
		\]
		and applying a recursive argument.
	\end{proof}

	The following elementary computation will be of use. Recall that $\Vert \cdot \Vert$ denotes the $2$-norm (Euclidean norm), whereas $|\cdot|_1$ is the $1$-norm.
	
	\begin{lemma}
		\label{lem:normx}
		For any vector $x \in \Z_N$, the following holds:
		\[
		\Vert x \Vert^2
		= \frac{1}{2N} \sum_{i,j=1}^N (x^i - x^j)^2
		\qquad \text{and} \qquad
		|x|_1
		\leq \frac{1}{N} \sum_{i,j=1}^N |x^i - x^j|.
		\]
	\end{lemma}
	
	\begin{proof}
		Indeed:
		\begin{align*}
			\sum_{i,j=1}^N (x^i - x^j)^2
			&= \sum_{i,j=1}^N \left[ (x^i)^2 + (x^j)^2 - 2x^i x^j \right] \\
			&= 2N \Vert x \Vert^2 - 2 \sum_{i,j=1}^N x^i x^j \\
			&= 2N \Vert x \Vert^2,
		\end{align*}
		which proves the first assertion. For the second one, without loss of generality, we can assume that $x^i \geq 0$ for $i=1,2, \dots, i_*$ and $x^i \leq 0$ otherwise. Since $\sum_i x^i = 0$, we obtain $\sum_{i\leq i_*} x^i = \sum_{i > i_*} (-x^i) = \frac{|x|_1}{2}$. Then:
		\[
		\sum_{i,j=1}^N | x^i - x^j |
		= 2 \sum_{i<j}^N | x^i - x^j |
		\geq 2 \sum_{i \leq i_* < j} (x^i - x^j),
		\]
		which equals $(N-i_*) |x|_1 + i_* |x|_1 = N |x|_1$.
	\end{proof}

	With the previous results, in particular the coupling described in Lemma \ref{lem:coupling_non_increasing} and the uniform moments of Theorem \ref{thm:unif_moments}, we are now ready to prove the following result:
	
	\begin{theorem}[almost-sure convergence]
		\label{thm:normX-Yto0}
		Let $(X_t,Y_t)_{t \in \NN}$ be the natural coupling described in Definition \ref{def:natural_coupling}, starting from any random vectors $X_0, Y_0 \in \Z_N$. Then $\Vert X_t - Y_t \Vert$ is non-increasing a.s., and
		\[
		\lim_{t\to\infty} \Vert X_t - Y_t \Vert = 0
		\quad \text{a.s.}
		\]
	\end{theorem}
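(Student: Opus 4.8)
The first assertion requires nothing new: Lemma~\ref{lem:coupling_non_increasing} already gives that $\Vert X_t-Y_t\Vert$ is non-increasing a.s.\ (this is exactly where $KL<1$ is used). Hence $\Vert X_t-Y_t\Vert^2$ decreases to some a.s.\ limit $\ell\geq 0$, and the whole task is to show $\ell=0$ a.s. Throughout I write $Z_t:=X_t-Y_t\in\Z_N$ and, for the state at time $t$, set $R_t:=\sum_{i\neq j}|Z_t^i-Z_t^j|\,|b(X_t^i-X_t^j)-b(Y_t^i-Y_t^j)|\geq 0$. The plan has three steps: extract summability of $(R_t)$ from the contraction estimate; produce a random subsequence along which both processes are bounded; and combine the two using that $b^{-1}$ is Lipschitz on compacts.

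Step 1 (summability). Averaging \eqref{eq:normX-Ynon_increasing} over the uniformly chosen pair $(I_t,J_t)$, conditionally on $\mathcal F_t:=\sigma(X_s,Y_s:s\leq t)$, gives $\EE[\Vert Z_{t+1}\Vert^2\mid\mathcal F_t]\leq \Vert Z_t\Vert^2-\tfrac{2K(1-KL)}{N(N-1)}R_t$. Dropping the last term and iterating the tower property shows $\EE[\Vert Z_t\Vert^2\mid\mathcal F_0]\leq\Vert Z_0\Vert^2$ for all $t$, so these conditional expectations are a.s.\ finite; telescoping the inequality and using $\Vert Z_T\Vert^2\geq0$ then yields $\sum_{t\geq0}\EE[R_t\mid\mathcal F_0]\leq\tfrac{N(N-1)}{2K(1-KL)}\Vert Z_0\Vert^2<\infty$ a.s., hence (Tonelli) $\sum_t R_t<\infty$ and in particular $R_t\to0$ a.s.

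Step 2 (a bounded subsequence). One cannot conclude $Z_t\to0$ directly from $R_t\to0$: because $b$ flattens near $\pm\infty$, if $X_t^i-X_t^j$ and $Y_t^i-Y_t^j$ drift off to infinity together their $b$-values may converge even though $Z_t^i-Z_t^j$ stays bounded away from $0$. To rule this out I produce a subsequence on which both processes stay bounded. Let $(\tilde X_t)$ be an Elo process started at $\tilde X_0\equiv0$, coupled to $(X_t)$ and $(Y_t)$ by the natural coupling (Definition~\ref{def:natural_coupling}), i.e.\ using identical randomness at every step. Then Lemma~\ref{lem:coupling_non_increasing} gives $\Vert X_t-\tilde X_t\Vert\leq\Vert X_0\Vert$ for all $t$, while Theorem~\ref{thm:unif_moments} with $p=1$ (legitimate since $\tilde X_0=0$) gives $\sup_t\EE[\Vert\tilde X_t\Vert]<\infty$, so Fatou yields $\liminf_t\Vert\tilde X_t\Vert<\infty$ a.s. Consequently there is an a.s.\ finite random $M$ and a random subsequence $(t_k)$ with $\Vert X_{t_k}\Vert\leq\Vert\tilde X_{t_k}\Vert+\Vert X_0\Vert\leq M$ and $\Vert Y_{t_k}\Vert\leq\Vert X_{t_k}\Vert+\Vert Z_{t_k}\Vert\leq\Vert X_{t_k}\Vert+\Vert Z_0\Vert\leq M$ for all $k$.

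Step 3 (conclusion). Along $(t_k)$ we have $|X_{t_k}^i-X_{t_k}^j|,|Y_{t_k}^i-Y_{t_k}^j|\leq 2M$, so by the definition of $\ell_{2M}>0$ in assumption~\ref{ass:b} (and monotonicity of $b$), $|b(X_{t_k}^i-X_{t_k}^j)-b(Y_{t_k}^i-Y_{t_k}^j)|\geq\ell_{2M}\,|Z_{t_k}^i-Z_{t_k}^j|$ for every $i\neq j$; summing and applying Lemma~\ref{lem:normx} to $Z_{t_k}\in\Z_N$ gives $R_{t_k}\geq 2N\ell_{2M}\Vert Z_{t_k}\Vert^2$. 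Since $R_{t_k}\to0$ and $\ell_{2M}>0$, we get $\Vert Z_{t_k}\Vert^2\to0$, and as $\Vert Z_t\Vert^2$ is non-increasing this forces $\ell=\lim_t\Vert Z_t\Vert^2=0$ a.s. I expect Step~2 to be the crux: recognizing that the flattening of $b$ at infinity is the only obstruction, and that it is defeated precisely by the non-escape of the process encoded in Theorem~\ref{thm:unif_moments} (ultimately in the Lyapunov estimate of Lemma~\ref{lem:EVX}), with the coupling to a deterministically-started process serving to make that estimate applicable even when $X_0$ has no finite moments. One must also keep track that $\Vert X_0\Vert$, $M$ and $\ell_{2M}$ are random but a.s.\ finite/positive, so all the above is to be read on a single full-measure event.
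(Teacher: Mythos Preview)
Your proof is correct and follows essentially the same strategy as the paper's: the contraction estimate of Lemma~\ref{lem:coupling_non_increasing}, the local Lipschitz inverse of $b$ via $\ell_M$, and the uniform moment bound of Theorem~\ref{thm:unif_moments} applied through a natural coupling with a process started at $0$. The execution differs only in organization: you telescope to get summability $\sum_t R_t<\infty$ and then use Fatou to produce a random bounded subsequence directly, whereas the paper passes to expectation via boundedness of $b$, extracts a deterministic subsequence, and argues by contradiction (if $H>0$ then $M_t\to\infty$, forcing the reference process to escape). Your route is arguably a touch more direct, but the two are interchangeable.
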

	
	\begin{proof}
		We already know from Lemma \ref{lem:coupling_non_increasing} that $\Vert X_t - Y_t \Vert$ is a.s.\ non-increasing when $KL<1$. Call $H = \lim_t \Vert X_t - Y_t \Vert$; our goal is to prove $H \equiv 0$ a.s. From \eqref{eq:normX-Ynon_increasing}:
		\begin{align*}
			& 2K (1-KL) \left| X_t^{I_t}-X_t^{J_t} - Y_t^{I_t}+Y_t^{J_t} \right| \left| b(X_t^{I_t}-X_t^{J_t})-b(Y_t^{I_t}-Y_t^{J_t}) \right| \\
			&\leq \left\Vert X_t - Y_t \right\Vert^2 - \left\Vert X_{t+1} - Y_{t+1} \right\Vert^2,
		\end{align*}
		where $(I_t,J_t)$ is the (random) pair of players involved in the $(t+1)$-th match. Notice that both terms on the right-hand side converge to $H^2$ a.s., which forces the left-hand side to converge to 0; the main idea of the proof is to transfer this convergence to $\Vert X_t - Y_t \Vert$.
		
		Using that $|u-v| \geq |b(u)-b(v)|/L$, we thus get
		\[
		\lim_{t\to\infty} \left( b(X_t^{I_t}-X_t^{J_t})-b(Y_t^{I_t}-Y_t^{J_t}) \right)^2
		= 0
		\quad \text{a.s.}
		\]
		This implies that the expected value of the sequence converges to 0, because $b$ is bounded. Since $(I_t,J_t)$ is chosen uniformly over all possible pairs and independently of $(X_t,Y_t)$, we obtain
		\[
		\lim_{t\to\infty} \EE\left[ \sum_{i,j=1}^N \left( b(X_t^i-X_t^j)-b(Y_t^i-Y_t^j) \right)^2 \right]
		= 0.
		\]
		Observe that, for any $M>0$, we have $|b(u)-b(v)| \geq \ell_M |u-v|$ when $u,v \in [-M,M]$, where $\ell_M>0$ was defined in \eqref{eq:ell}. Thus, defining
		\[
		M_t := \max_{i,j} \max \{ |X_t^i-X_t^j|, |Y_t^i-Y_t^j| \},
		\]
		gives
		\[
		\lim_{t\to\infty} \EE\left[ (\ell_{M_t})^2 \sum_{i,j = 1}^N \left( X_t^i-X_t^j-Y_t^i+Y_t^j \right)^2 \right]
		= 0.
		\]
		Since $X_t - Y_t \in \Z_N$, the last summation equals $2N \Vert X_t - Y_t \Vert^2$, thanks to Lemma \ref{lem:normx}. Thus, $\lim_t \EE[(\ell_{M_t})^2 \Vert X_t - Y_t \Vert^2] = 0$. This implies that for some (non-random) subsequence, we have
		\[
		\lim_{t\to\infty} \ell_{M_t} \Vert X_t - Y_t \Vert
		= H \lim_{t\to\infty} \ell_{M_t} = 0
		\quad \text{a.s.}
		\]
		Let's argue by contradiction: suppose that $H>0$ with positive probability. Then, a.s.\ on the event $\{H>0\}$, we have $\ell_{M_t} \to 0$, or equivalently, $M_t \to \infty$, because $\ell_M$ decreases to $0$ as $M$ increases, thanks to Assumption \ref{ass:b}. Let $(Z_t)_{t\in \NN}$ be the natural coupling starting from $Z_0 \equiv 0$. Again using that $\Vert X_t - Z_t \Vert$ is non-increasing, we have $\Vert X_t \Vert \leq \Vert X_t - Z_t\Vert + \Vert Z_t \Vert \leq \Vert X_0 \Vert + \Vert Z_t \Vert$; the same holds for $Y_t$. Thus:
		\[
		M_t
		\leq \sum_{i,j=1}^N \{ |X_t^i| + |X_t^j| + |Y_t^i| + |Y_t^j| \}
		\leq C_N \{ \Vert X_t \Vert + \Vert Y_t \Vert \}
		\leq C_N \{ \Vert X_0 \Vert + \Vert Y_0 \Vert + 2 \Vert Z_t \Vert \}.
		\]
		This forces $\Vert Z_t \Vert \to \infty$ with positive probability, which implies $\EE[\Vert Z_t \Vert] \to \infty$ for this subsequence. But this contradicts the fact that $\sup_t \EE [ \Vert Z_t \Vert ] < \infty$, granted by Theorem \ref{thm:unif_moments}. Therefore, we must have $H\equiv 0$ a.s.
	\end{proof}

	\section{Convergence to the Elo stationary distribution}
	\label{sec:convergence_stationary}
	
	We now study the convergence to the equilibrium distribution of the Elo process. With the developments of the previous section, we are ready to prove the following result. For the sake of completeness, it provides a proof of the existence and weak convergence to $\pi$. Recall that, for $p\geq 1$, the $p$-Wasserstein distance between two probability measures $\mu$ and $\nu$ on $\RR^N$, is defined as
	\[
	W_p(\mu,\nu)
	= \inf \left( \EE[\Vert Y - Z \Vert^p ] \right)^{1/p},
	\]
	where the infimum is taken over all couplings of $\mu$ and $\nu$, that is, over all possible random pairs $(Y,Z)$ defined on a common probability space, such that $Y \sim \mu$ and $Z \sim \nu$.

	\begin{theorem}[stationary distribution $\pi$]
		\label{thm:stationary}
		There exists a unique stationary distribution $\pi$ for $(X_t)_{t \in \NN}$, which satisfies:
		\begin{enumerate}
			\item \label{thm:stationary:i}
			$\EE[\Vert X \Vert^p e^{\theta_0 \Vert X\Vert}] <\infty$ when $X\sim \pi$, for all $p\geq 0$, where $\theta_0>0$ is provided by Lemma \ref{lem:Foster-Lyapunov-drift}.
			
			\item \label{thm:stationary:ii}
			$\lim_t \Law(X_t) = \pi$ weakly.
			
			\item \label{thm:stationary:iii}
			If $\EE[\Vert X_0 \Vert^p] < \infty$ for some $p\geq 1$, then $\lim_t W_p(\Law(X_t), \pi) = 0$.
			
		\end{enumerate}
	\end{theorem}
	
	\begin{proof}
		Denote $\Theta : \mathcal{P}(\Z_N) \to \mathcal{P}(\Z_N)$ the mapping that associates with $\nu \in \mathcal{P}(\Z_N)$ the law of the vector obtained after performing one step of the Elo scheme, starting from a $\nu$-distributed random vector. That is, from \eqref{eq:xIJ}, we see that for any test function $\phi$,
		\begin{align*}
			&\int_{\Z_N} \phi(x) \Theta \nu(dx) \\
			&= \frac{1}{N(N-1)} \sum_{i \neq j} \int_{\Z_N} \int_{-1}^1 \phi\left(x + K\{s-b(x^i-x^j)\} (e_i - e_j) \right) \sigma^{ij}(ds) \nu(dx) \\
			&= \frac{1}{N(N-1)} \sum_{i \neq j} \int_{\Z_N} \int_{-1}^1 \phi\left(x - b(x^i-x^j) (e_i-e_j)  + K s (e_i-e_j) \right) \sigma^{ij}(ds) \nu(dx),
		\end{align*}
		where $\sigma^{ij} = \Law(S^{ij})$. Thanks to the convolutional structure of the last expression and the continuity of $b(\cdot)$, it is readily seen that $\Theta$ is continuous with respect to weak convergence.
		
		Take any $\nu \in \mathcal{P}(\Z_N)$ such that $\int \phi(x) \nu(dx) < \infty$ for $\phi(x)= \Vert x \Vert^p e^{\theta_0 \Vert x \Vert}$ (for instance, the Dirac mass at $(0,\ldots,0)$), and consider the Cesàro sums $\mu_t := \frac{1}{t} \sum_{k=0}^{t-1} \Theta^k \nu$. Then, $\sup_t \int \phi(x) \mu_t(dx) < \infty$, thanks to Theorem \ref{thm:unif_moments}. Thus, $(\mu_t)_{t \in \NN}$ is tight, which implies that it admits at least one weak limit. Since $\Theta$ is continuous, by the Krylov-Bogolyubov procedure (see for instance \cite[Theorem 1.5.8]{arnold1998}), every such limit must be invariant for $\Theta$. This proves the existence of $\pi$. Uniqueness is a direct consequence of Theorem \ref{thm:normX-Yto0}.
		
		Clearly, the finiteness of $\int \phi(x) \pi(dx)$ is inherited from the uniform bound for $\mu_t$, proving \ref{thm:stationary:i}. Point \ref{thm:stationary:ii} is again consequence of Theorem \ref{thm:normX-Yto0}: take $(X_t,Y_t)$ to be the natural coupling (Definition \ref{def:natural_coupling}) starting from $X_0$ and $Y_0 \sim \pi$, thus $Y_t \sim  \pi$ for all $t \in \NN$. Since $\Vert X_t - Y_t\Vert$ a.s.\ decreases to $0$, we deduce that $\Law(X_t) \to \pi$. Moreover, if $\EE[\Vert X_0 \Vert^p]<\infty$, then the dominated convergence theorem gives $\EE[\Vert X_t - Y_t \Vert^p] \to 0$, which proves \ref{thm:stationary:iii}.
	\end{proof}

	We now aim to establish exponential convergence to $\pi$. To this end, the following result provides a contraction estimate on compact sets for the natural coupling. It will be useful to prove our main contraction estimate (Lemma \ref{lem:contraction_estimate}). Given a coupling $(X_t,Y_t)$, we denote $\EE_{x,y}$ the expectation conditional on $(X_0,Y_0) = (x,y)$.
	
	\begin{lemma}
		\label{lem:contraction_natural_coupling_compact}
		Let $(X_t,Y_t)$ be the natural coupling. Let $\omega(x,y) := \frac{4K(1-KL)}{N} \ell_{2\max(\Vert x \Vert, \Vert y \Vert)} >0$. Then, for any $x,y \in \Z_N$, we have
		\[
		\EE_{x,y}[\Vert X_1 - Y_1 \Vert^2]
		\leq (1-\omega(x,y)) \Vert x-y \Vert^2.
		\]
	\end{lemma}
	
	\begin{proof}
		Starting from $(X_0,Y_0) = (x,y)$, Lemma \ref{lem:coupling_non_increasing} gives
		\[
		\Vert X_1 - Y_1 \Vert^2 - \Vert x-y \Vert^2
		\leq - C |x^I-x^J - y^I + y^J| \, |b(x^I-x^J) - b(y^I - y^J)|,
		\]
		for $C = 2K(1-KL) > 0$, where $(I,J)$ is chosen uniformly at random among all pairs of distinct indices. Thanks to Assumption \ref{ass:b}, we have
		\begin{align*}
			|b(x^I-x^J) - b(y^I - y^J)|
			&\geq \ell_{\max(|x^I-x^J|,|y^I-y^J|)} |x^I-x^J - y^I + y^J| \\
			&\geq \ell_{2 \max(\Vert x \Vert, \Vert y\Vert)} |x^I-x^J - y^I + y^J|.
		\end{align*}
		Consequently:
		\begin{align*}
			\EE_{x,y}[\Vert X_1 - Y_1 \Vert^2] - \Vert x-y \Vert^2
			&\leq - C \ell_{2 \max(\Vert x \Vert, \Vert y\Vert)} \EE_{x,y}[(x^I - x^J - y^I + y^J)^2] \\
			&= - \frac{C \ell_{2 \max(\Vert x \Vert, \Vert y\Vert)}}{N(N-1)} \sum_{i,j=1}^N (x^i - x^j - y^i + y^j)^2 \\
			&= - \frac{2C \ell_{2 \max(\Vert x \Vert, \Vert y\Vert)}}{N-1} \Vert x-y \Vert^2,
		\end{align*}
		where in the last step we used Lemma \ref{lem:normx}. The desired bound follows.
	\end{proof}

	\begin{lemma}
		\label{lem:contraction_estimate}
		Let $\theta_0>0$ be the constant provided by Lemma \ref{lem:Foster-Lyapunov-drift}. For any $\theta \in (0,\theta_0]$ and $\beta > 0$, define $\Gamma_{\theta,\beta}: \RR^N \times \RR^N \to [0,+\infty)$ by
		\[
		\Gamma_{\theta,\beta}(x,y)
		= \Vert x-y \Vert^2 (1+\beta V_\theta(x) + \beta V_\theta(y)),
		\]
		Let $(X_t,Y_t)$ be the natural coupling. Then, there exist constants $\beta \in (0,1)$ and $\kappa \in (0,1)$, depending on $\theta$, such that for all $x,y \in \Z_N$,
		\[
		\EE_{x,y}[\Gamma_{\theta,\beta}(X_1, Y_1)]
		\leq (1-\kappa) \Gamma_{\theta,\beta}(x,y).
		\]
	\end{lemma}
	
	\begin{proof}
		The idea is to obtain contraction from one of the factors in $\Gamma_{\theta,\beta}(x,y)$, depending on the sizes of $x$ and $y$. When $x$ and $y$ are relatively small, we will obtain contraction of the factor $\Vert x-y\Vert^2$, by means of applying Lemma \ref{lem:contraction_natural_coupling_compact} on a compact set. When $x$ and $y$ are both large, Lemma \ref{lem:Foster-Lyapunov-drift} will provide contraction for the factor $(1+\beta V_\theta(x)+\beta V_\theta(y))$. The delicate case is when either $x$ or $y$ is small and the other is large, requiring a more careful choice of the constants.
		
		Given $\theta \in (0,\theta_0]$, let $\lambda\in(0,1)$ and $R_1>0$ be the constants provided by Lemma \ref{lem:Foster-Lyapunov-drift}. Recall that $\Vert X_1 - x \Vert$ and $\Vert Y_1 - y \Vert$ are a.s.\ bounded by $M:=\sqrt{8}K$. Let us fix $A > 0$ large enough such that $(1-\lambda) e^{\theta A } > e^{\theta M}$; the reason for this choice will become apparent below. Without loss of generality, assume $\Vert x \Vert \geq \Vert y \Vert$. We divide the proof into three cases.
		
		\begin{itemize}
			\item Case 1: $\Vert x \Vert \leq R_1 + A$. Then $V_\theta(X_1)$ and $V_\theta(Y_1)$ are bounded above by some constant $\bar{V}$. Thanks to Lemma \ref{lem:contraction_natural_coupling_compact}, we have:
			\begin{align*}
				\EE_{x,y}[\Gamma_{\theta,\beta}(X_1, Y_1)]
				&\leq (1+2\beta \bar{V}) \EE_{x,y}[\Vert X_1 - Y_1 \Vert^2] \\
				&\leq (1+2\beta \bar{V}) (1-\omega(x,y)) \Vert x - y \Vert^2 \\
				&\leq (1+2\beta \bar{V}) (1-\bar{\omega}) \Vert x - y \Vert^2,
			\end{align*}
			where $\bar{\omega} := \frac{4K(1-KL)}{N} \ell_{2R_1+2A}>0$. Choosing $\beta<1$ small enough, the last expression can be bounded above by $(1-\bar{\omega}/2) \Vert x - y \Vert^2$.
			
			\item Case 2: $\Vert x \Vert > R_1 + A$, $\Vert y \Vert > R_1$. Then, thanks to Lemma \ref{lem:Foster-Lyapunov-drift} and the fact that the coupling is a.s.\ non-increasing, we have
			\begin{align*}
				\EE_{x,y}[\Gamma_{\theta,\beta}(X_1, Y_1)]
				&\leq \Vert x - y \Vert^2 \EE_{x,y}[1 + \beta V_\theta(X_1) + \beta V_\theta(Y_1) ] \\
				&\leq \Vert x - y \Vert^2 (1 + \beta \lambda V_\theta(x) + \beta \lambda V_\theta(y) ).
			\end{align*}
			Thus:
			\[
			\frac{\EE_{x,y}[\Gamma_{\theta,\beta}(X_1, Y_1)]}{\Gamma_{\theta,\beta}(x,y)}
			\leq \lambda + \frac{1-\lambda}{1+\beta V_\theta(x)+\beta V_\theta(y)}
			\leq \lambda + \frac{1-\lambda}{1+2\beta}
			=: \tilde{\lambda}
			< 1.
			\]
			
			\item Case 3: $\Vert x \Vert > R_1 + A$, $\Vert y \Vert \leq R_1$. Since $\Vert Y_1 \Vert \leq \Vert y \Vert + M$, as in Case 2, we have
			\begin{align*}
				&\EE_{x,y}[\Gamma_{\theta,\beta}(X_1, Y_1)] \\
				&\leq \Vert x - y \Vert^2 (1 + \beta \lambda V_\theta(x) + \beta e^{\theta M} V_\theta(y) ) \\
				&\leq \Vert x - y \Vert^2 \left(
				\lambda \{ 1 + \beta V_\theta(x) + \beta V_\theta(y) \}
				+ 1-\lambda +\beta e^{\theta M} V_\theta(y) - \lambda \beta V_\theta(y)
				\right).
			\end{align*}
			Thus:
			\begin{align*}
				\frac{\EE_{x,y}[\Gamma_{\theta,\beta}(X_1, Y_1)]}{\Gamma_{\theta,\beta}(x,y)}
				&\leq \lambda + \frac{1-\lambda + \beta e^{\theta M} V_\theta(y) - \beta \lambda V_\theta(y)}{1+\beta V_\theta(x) + \beta V_\theta(y)} \\
				&\leq \lambda + \frac{1-\lambda + \beta e^{\theta M} V_\theta(y)}{1 + \beta e^{\theta A} V_\theta(y)},
			\end{align*}
			because $V_\theta(x) \geq e^{\theta (R_1 + A)} \geq e^{\theta A} V_\theta(y)$. Now, thanks to our previous choice of $A$, it can be easily seen that the last expression is decreasing as a function of $V_\theta(y)$. Since $V_\theta(y) \geq 1$, we obtain:
			\[
			\frac{\EE_{x,y}[\Gamma_{\theta,\beta}(X_1, Y_1)]}{\Gamma_{\theta,\beta}(x,y)}
			\leq \lambda + \frac{1-\lambda + \beta e^{\theta M}}{1+\beta e^{\theta A}}
			=: \hat{\lambda}
			< 1.
			\]
		\end{itemize}
		
		Taking $\kappa = \min(\bar{\omega}/2, 1-\tilde{\lambda},1-\hat{\lambda})$, the result follows.
	\end{proof}

	We are ready to prove Theorem \ref{thm:main:geometric_convergence}:
	
	\begin{proof}[Proof of Theorem \ref{thm:main:geometric_convergence}]
		The constant $\theta_0$ is provided by Lemma \ref{lem:Foster-Lyapunov-drift}. Given any $\theta \in (0,\theta_0]$, let $\beta, \kappa \in (0,1)$ be the constants provided by Lemma \ref{lem:contraction_estimate} (they depend on $\theta$), and recall that we denote $\Gamma_{\theta,\beta}(x,y) = \Vert x - y \Vert^2 (1+\beta e^{\theta \Vert x \Vert } + \beta e^{\theta \Vert y \Vert })$.
		
		For $Y_0 \sim \pi$ independent of $X_0$, let $(X_t,Y_t)$ be the natural coupling; thus, $Y_t \sim \pi$ for all $t\geq 0$. It suffices to bound $q_t := \EE[\Gamma_{\theta,\beta}(X_t,Y_t)] \geq W_2^2( \Law(X_t), \pi)$. Iterating the inequality provided by Lemma \ref{lem:contraction_estimate}, yields $q_t \leq q_0 (1-\kappa)^t$. It remains to bound $q_0$. Let
		\[
		\tilde{C} = \EE[(1+\Vert Y_0 \Vert^2) e^{\theta_0 \Vert Y_0 \Vert}],
		\]
		which is finite thanks to Theorem \ref{thm:stationary}. Thus:
		\begin{align*}
			q_0
			&= \EE\left[\Vert X_0 - Y_0\Vert^2 (1 + \beta e^{\theta \Vert X_0 \Vert } + \beta e^{\theta \Vert Y_0 \Vert })\right] \\
			&\leq 2\EE\left[
			\Vert X_0 \Vert^2
			+ \Vert Y_0\Vert^2
			+ \Vert X_0 \Vert^2 e^{\theta \Vert X_0 \Vert }
			+ \Vert X_0 \Vert^2 e^{\theta \Vert Y_0 \Vert }
			+ \Vert Y_0 \Vert^2 e^{\theta \Vert X_0 \Vert }
			+ \Vert Y_0 \Vert^2 e^{\theta \Vert Y_0 \Vert }
			\right] \\
			&\leq 2 \EE\left[
			\tilde{C} \Vert X_0 \Vert^2
			+ \tilde{C}
			+ \tilde{C} \Vert X_0 \Vert^2 e^{\theta \Vert X_0 \Vert }
			+ \tilde{C} \Vert X_0 \Vert^2
			+ \tilde{C} e^{\theta \Vert X_0 \Vert }
			+ \tilde{C}
			\right] \\
			&\leq 4 \tilde{C} \EE\left[ (1+\Vert X_0 \Vert^2) (1 + e^{\theta \Vert X_0 \Vert} ) \right] \\
			&\leq C \EE\left[ (1+\Vert X_0 \Vert^2) e^{\theta \Vert X_0 \Vert} \right],
		\end{align*}
		for $C = 8\tilde{C}$. The result is proven.
	\end{proof}

	\section{Properties of the Elo stationary distribution}
	\label{sec:properties_stationary}

	We now study the properties of the stationary distribution $\pi$. As mentioned earlier, the ratings are biased estimators of the true skills. However, it is straightforward to verify that the \emph{average predicted score} is unbiased. Although this is already mentioned in the literature, for the reader's convenience, we provide a precise statement and proof in the following proposition.
	
	We will use the symbol $\equald$ for ``equality in the sense of distribution''. Also, we adopt the notation of Remark \ref{rmk:xIJ}: denote $(I,J)$ a random pair of distinct indices chosen uniformly at random, and, conditionally on $I=i, J=j$, the variable $S^{IJ}$ is an independent copy of $S^{ij}$.
	
	\begin{proposition}
		\label{prop:EbXiXj}
		Let $X \sim \pi$. For any $i\in\{1,\ldots,N\}$:
		\[
		\EE \left[ \sum_{\substack{j=1\\ j\neq i}}^N b(X^i - X^j) \right]
		= \sum_{\substack{j=1\\ j\neq i}}^N b(\rho^i - \rho^j).
		\]
	\end{proposition}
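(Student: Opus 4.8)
The plan is to exploit stationarity: since $X \sim \pi$, performing one step of the Elo scheme leaves the law unchanged, so the expectation of any (integrable) test function is invariant under $\Theta$. I would apply this to the coordinate function $\phi(x) = x^i$. Concretely, writing $X_1$ for the state after one step started from $X_0 = X \sim \pi$, stationarity gives $\EE[X_1^i] = \EE[X^i]$, provided both sides are finite — which they are, since $\pi$ has a finite exponential moment by Theorem \ref{thm:stationary}, hence finite first moment, and one step changes each coordinate by at most $2K$ in absolute value.

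The next step is to compute $\EE[X_1^i] - \EE[X^i]$ explicitly using the dynamics of Definition \ref{def:EloProcess} (equivalently Remark \ref{rmk:xIJ}). Coordinate $i$ changes only when player $i$ is selected, either as the first or the second member of the pair $(I,J)$. Conditioning on the uniformly chosen pair and then on the score, and using the model assumption $\EE[S^{ij} \mid I=i,J=j] = b(\rho^i-\rho^j)$ together with $\sigma^{ji} = -\sigma^{ij}$, the contribution of the pair $(i,j)$ (as $I=i,J=j$) is $\tfrac{1}{N(N-1)} K\,\EE\big[b(\rho^i-\rho^j) - b(X^i-X^j)\big]$, and the contribution of the pair $(j,i)$ (as $I=j,J=i$, so coordinate $i$ is the ``$j$'' in the update) is the same quantity again, by the oddness of $b$ and of $\sigma$. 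Summing over all $j \neq i$, one gets
\[
    \EE[X_1^i] - \EE[X^i]
    = \frac{2K}{N(N-1)} \sum_{\substack{j=1\\ j\neq i}}^N \EE\big[ b(\rho^i-\rho^j) - b(X^i-X^j) \big].
\]
Setting the left-hand side to $0$ and dividing by the nonzero constant $\tfrac{2K}{N(N-1)}$ yields the claimed identity, after moving the constant term out of the expectation (legitimate since $b$ is bounded).

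I do not expect any serious obstacle here; the only points requiring a little care are the integrability justification for invoking stationarity on the unbounded function $x \mapsto x^i$ (handled by the exponential moment of $\pi$ from Theorem \ref{thm:stationary}, or alternatively one can note directly that the increment is bounded a.s.\ and argue with the stationary increment having mean zero), and the bookkeeping of the two ways player $i$ can appear in the selected pair, where the sign conventions $S^{ji}=-S^{ij}$ and $b$ odd must be tracked so that the two contributions add rather than cancel. Everything else is the routine computation of the one-step drift of a linear functional already carried out in essentially the same form in the proof of Lemma \ref{lem:EVX}.
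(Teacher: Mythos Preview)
Your proposal is correct and follows essentially the same approach as the paper: both exploit stationarity to conclude that the expected one-step increment of the $i$-th coordinate vanishes, then compute this increment using $\EE[S^{ij}] = b(\rho^i-\rho^j)$ and the oddness of $b$ and of the scores. The paper carries this out in vector form (taking expectation of the identity $X \equald X + K\{S^{IJ}-b(X^I-X^J)\}(e_I-e_J)$ and extracting the $i$-th component), while you work directly with $\phi(x)=x^i$; the bookkeeping and the integrability justification you give are both fine.
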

	
	\begin{proof}
		Since $\pi$ is stationary, from \eqref{eq:xIJ} we have
		\[
		X
		\equald
		X + K\{ S^{IJ} - b(X^I - X^J)\} (e_I - e_J).
		\]
		Taking expectation (component-wise on $\RR^N$) and cancelling $\EE[X]$, yields
		\begin{align*}
			0
			&= \EE \left[ \sum_{\substack{i,j=1\\ i\neq j}}^N K \{ S^{ij} - b(X^i - X^j) \} (e_i - e_j) \right] \\
			&= K \sum_{\substack{i,j=1\\ i\neq j}}^N \{ b(\rho^i - \rho^j) - \EE[b(X^i - X^j)] \} (e_i - e_j),
		\end{align*}
		where we have used that $\EE[S^{ij}] = b(\rho^i - \rho^j)$. We have $b(\rho^i - \rho^j) = -b(\rho^j - \rho^i)$ thanks to $b(\cdot)$ being odd, and similarly for $b(X^i - X^j)$. Dividing by $K$ and extracting the $i$-th component, the result follows.
	\end{proof}
	
	Since the ratings are biased estimators of the true skills, an obvious challenge is to quantify how far apart is $X$ from $\rho$, especially as $K\to 0$. The following result is a step in this direction.
	
	\begin{lemma}
		\label{lem:EXrhobXrho}
		Let $X\sim \pi$. Then:
		\[
		K \var(S^{IJ})
		\leq \EE\left[ |X^I - X^J - \rho^I + \rho^J| \cdot |b(X^I - X^J) - b(\rho^I - \rho^J)| \right]
		\leq 2K.
		\]
	\end{lemma}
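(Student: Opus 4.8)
The plan is to start from the stationarity relation $X \equald X + K\{S^{IJ} - b(X^I - X^J)\}(e_I - e_J)$ and apply Lemma \ref{lem:coupling} with the coupling given by the true skill vector $\rho$. Concretely, I would fix the realization $\alpha = (I, J, S^{IJ})$ and compare $\E_\alpha(X)$ with $\E_\alpha(\rho)$. The key observation is that $\E_\alpha(\rho) = \rho + K\{S^{IJ} - b(\rho^I - \rho^J)\}(e_I - e_J)$, which is \emph{not} equal to $\rho$, but whose expectation (over $S^{IJ}$ conditioned on $I, J$) equals $\rho$ since $\EE[S^{ij} \mid I=i, J=j] = b(\rho^i - \rho^j)$. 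So $\rho$ is a ``fixed point in expectation'' of the Elo step.

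The main computation: using stationarity, $\EE[\Vert X \Vert^2] = \EE[\Vert \E_\alpha(X) \Vert^2]$ where the expectation is over $X \sim \pi$ and the step randomness $\alpha$ (all independent). I would write
\[
    \EE\big[\Vert \E_\alpha(X) - \E_\alpha(\rho) \Vert^2\big] - \EE\big[\Vert X - \rho \Vert^2\big]
    = \EE\big[\Vert \E_\alpha(X)\Vert^2 - \Vert X \Vert^2\big] - 2\,\EE\big[\langle \E_\alpha(X), \E_\alpha(\rho)\rangle - \langle X, \rho \rangle\big] + \EE\big[\Vert \E_\alpha(\rho)\Vert^2 - \Vert \rho \Vert^2\big].
\]
The first bracket vanishes by stationarity. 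For the second, $\E_\alpha(X) - X$ and $\E_\alpha(\rho) - \rho$ are both multiples of $(e_I - e_J)$, and after taking expectation over $S^{IJ}$ (conditionally on $I, J$, and using independence of $\alpha$ from $X$), the cross term $\EE[\langle \E_\alpha(X), \E_\alpha(\rho) - \rho\rangle]$ vanishes because $\EE[\E_\alpha(\rho) - \rho \mid I, J] = 0$; similarly $\EE[\langle \E_\alpha(X) - X, \rho\rangle] = \EE[\langle \E_\alpha(X) - X, \rho - \E_\alpha(\rho)\rangle]$... actually the cleanest route is: $\langle \E_\alpha(X), \E_\alpha(\rho)\rangle - \langle X, \rho\rangle = \langle \E_\alpha(X) - X, \E_\alpha(\rho)\rangle + \langle X, \E_\alpha(\rho) - \rho\rangle$, and taking $\EE[\,\cdot \mid X, I, J]$ over $S^{IJ}$ kills the $\E_\alpha(\rho) - \rho$ factor in the first term's $\E_\alpha(\rho) = \rho + (\text{mean zero})$ and kills the second term entirely, leaving $\EE[\langle \E_\alpha(X) - X, \rho\rangle]$, which by the same mean-zero argument on $\E_\alpha(X) - X$... this requires care since $\E_\alpha(X) - X$ does \emph{not} have zero conditional mean. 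I'll instead directly compute $\EE[\langle \E_\alpha(X) - X, \rho \rangle] = \EE[K\{S^{IJ} - b(X^I - X^J)\}(\rho^I - \rho^J)] = \EE[K\{b(\rho^I - \rho^J) - b(X^I - X^J)\}(\rho^I - \rho^J)]$ after averaging $S^{IJ}$. Combining with the third bracket $\EE[\Vert \E_\alpha(\rho)\Vert^2 - \Vert\rho\Vert^2] = 2K^2\,\EE[(S^{IJ} - b(\rho^I - \rho^J))^2] = 2K^2 \var(S^{IJ})$ (using $\Vert \E_\alpha(\rho)\Vert^2 - \Vert\rho\Vert^2 = 2K\{S^{IJ} - b(\rho^I-\rho^J)\}(\rho^I - \rho^J) + 2K^2\{S^{IJ} - b(\rho^I-\rho^J)\}^2$ and averaging), the left-hand side above equals $\EE[\Vert \E_\alpha(X) - \E_\alpha(\rho)\Vert^2 - \Vert X - \rho\Vert^2]$, which by Lemma \ref{lem:coupling} is bounded above by $-2K(1-KL)\,\EE[|X^I - X^J - \rho^I + \rho^J|\,|b(X^I-X^J) - b(\rho^I-\rho^J)|]$. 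Rearranging yields both inequalities: the lower bound $K\var(S^{IJ})$ comes from the nonnegativity after dividing through, and the upper bound $2K$ from $|S^{IJ} - b(\rho^I-\rho^J)| \le 2$ combined with bounding the cross terms; I'd track constants carefully, likely using $1 - KL \le 1$ and $\var(S^{IJ}) \ge 0$ for the clean statement.

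The main obstacle I anticipate is the bookkeeping of which terms have zero conditional mean and which do not: $\E_\alpha(\rho) - \rho$ is mean-zero given $(I,J)$ but $\E_\alpha(X) - X$ is not, so the cross terms must be expanded asymmetrically and the surviving term $\EE[K\{b(\rho^I-\rho^J) - b(X^I-X^J)\}(\rho^I - \rho^J)]$ must be folded into the final estimate — noting that since $b$ is increasing this term has a definite sign relative to $|X^I - X^J - \rho^I + \rho^J|\,|b(X^I-X^J)-b(\rho^I-\rho^J)|$ only after using $\sum_i X^i = \sum_i \rho^i = 0$, which is not obviously true pointwise. A cleaner alternative that avoids this: apply the identity $\Vert X - \rho\Vert^2 \equald \Vert \E_\alpha(X) - \rho\Vert^2$ directly (stationarity of $X$, with $\rho$ a fixed deterministic vector), expand $\Vert \E_\alpha(X) - \rho\Vert^2 - \Vert X - \rho\Vert^2 = 2K\{S^{IJ} - b(X^I-X^J)\}(X^I - X^J - \rho^I + \rho^J) + 2K^2\{S^{IJ} - b(X^I-X^J)\}^2$, take full expectation (LHS $= 0$), average $S^{IJ}$ to replace it by $b(\rho^I-\rho^J)$ in the first term and by $\var(S^{IJ}) + (b(\rho^I-\rho^J) - b(X^I-X^J))^2$ in the second, and solve. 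This gives exactly
\[
    \EE\big[(b(X^I-X^J) - b(\rho^I-\rho^J))(X^I-X^J-\rho^I+\rho^J)\big] = K\,\EE\big[\var(S^{IJ} \mid I,J) + (b(X^I-X^J)-b(\rho^I-\rho^J))^2\big],
\]
from which the lower bound $K\var(S^{IJ})$ is immediate (drop the nonnegative square, noting $b$ increasing makes the LHS a nonnegative quantity equal to $|X^I-X^J-\rho^I+\rho^J|\,|b(X^I-X^J)-b(\rho^I-\rho^J)|$), and the upper bound follows since the LHS is $\le \EE[|X^I-X^J-\rho^I+\rho^J| \cdot 2] $... no — better: $K\,\EE[\var(\cdot) + (\cdots)^2] \le K\,\EE[\var(\cdot)] + K\,\EE[|b(X^I-X^J)-b(\rho^I-\rho^J)| \cdot 2]$, and since $\var + $ square of a difference of two $[-1,1]$-valued quantities is at most $\var \cdot 1 + 2 \le 2$ pointwise after bounding... the cleanest is simply $\var(S^{IJ}\mid I,J) + (b(X^I-X^J)-b(\rho^I-\rho^J))^2 \le 1 + 1 = 2$ is false; rather I use $|b(X^I-X^J)-b(\rho^I-\rho^J)| < 2$ and $\var(S^{IJ}\mid I,J) \le \EE[(S^{IJ})^2\mid I,J] \le 1$, giving the product expression $\le K(1 + 2) $; to land exactly on $2K$ I would instead bound the middle expression before averaging: $\EE[|X^I-X^J-\rho^I+\rho^J|\,|b(X^I-X^J)-b(\rho^I-\rho^J)|] = K\,\EE[(S^{IJ}-b(X^I-X^J))(X^I-X^J-\rho^I+\rho^J)] \le K\,\EE[|S^{IJ}-b(X^I-X^J)|\cdot|b(X^I-X^J)-b(\rho^I-\rho^J)|] \le 2K$ using $|S^{IJ}-b(X^I-X^J)|\le 2$ and $|b(X^I-X^J)-b(\rho^I-\rho^J)| \le 1$. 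This asymmetric second approach is the one I would write up.
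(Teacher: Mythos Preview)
Your ``cleaner alternative'' is exactly the paper's argument: the paper (working with $N=2$ and $D=X^1-X^2$, $\delta=\rho^1-\rho^2$) expands $\Vert \E_\alpha(X)-\rho\Vert^2-\Vert X-\rho\Vert^2$, takes expectation using stationarity, and averages out $S^{IJ}$ to obtain the identity
\[
    \EE\big[(X^I-X^J-\rho^I+\rho^J)\{b(X^I-X^J)-b(\rho^I-\rho^J)\}\big]
    = K\,\EE\big[\var(S^{IJ}\mid I,J)\big] + K\,\EE\big[(b(X^I-X^J)-b(\rho^I-\rho^J))^2\big],
\]
from which both inequalities are read off. Your detour through Lemma~\ref{lem:coupling} in the first half is unnecessary and, as you noticed, leaves awkward cross terms; you were right to abandon it.

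The identity and the lower bound are fine. The problem is your final upper-bound step. The equality you write,
\[
    \EE[|X^I-X^J-\rho^I+\rho^J|\,|b(X^I-X^J)-b(\rho^I-\rho^J)|]
    = K\,\EE[(S^{IJ}-b(X^I-X^J))(X^I-X^J-\rho^I+\rho^J)],
\]
is not what your expansion gives: the right-hand side that actually emerges from setting the expected increment of $\Vert X-\rho\Vert^2$ to zero is $K\,\EE[(S^{IJ}-b(X^I-X^J))^2]$, not the bilinear form you wrote. Your subsequent bound ``$|b(X^I-X^J)-b(\rho^I-\rho^J)|\le 1$'' is also false (the correct pointwise bound is $<2$), so the chain of inequalities does not land on $2K$. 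The paper's fix is simply to stay with the identity above: the first term on the right is at most $K$ because $|S^{ij}|\le 1$ forces $\var(S^{ij})\le 1$, and the second term, for $N=2$, equals $K\,\var(b(D))\le K$ since Proposition~\ref{prop:EbXiXj} gives $\EE[b(D)]=b(\delta)$. Summing yields $2K$.
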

	
	\begin{proof}
		For simplicity, we assume $N=2$, the proof of the general case being a straightforward extension.  Since $X \sim \pi$ is stationary, from \eqref{eq:xIJ}, we see that
		\begin{equation}
			\label{eq:X1X2}
			\begin{pmatrix}
				X^1 \\
				X^2
			\end{pmatrix}
			\equald
			\begin{pmatrix}
				X^1 + K \{S^{12} - b(X^1-X^2) \} \\
				X^2 - K \{S^{12} - b(X^1-X^2) \}
			\end{pmatrix}.
		\end{equation}
		(Since $b(\cdot)$ is odd and $S^{21} \equald -S^{12}$, it does not matter if the sampled pair is $(1,2)$ or $(2,1)$). Call $D=X^1 - X^2$ and $\delta = \rho^1-\rho^2$. Denoting $S=S^{12}$ and subtracting in \eqref{eq:X1X2}, we obtain $D \equald D + 2K \{S - b(D) \}$. Thus:
		\[
		D-\delta
		\equald
		\{D-\delta\} - 2K \{b(D) - b(\delta)\} + 2K \{S - b(\delta)\}
		\]
		Squaring, yields
		\begin{align*}
			\{D-\delta\}^2
			&\equald
			\{D-\delta\}^2 + 4K^2 \{b(D) - b(\delta)\}^2 + 4K^2 \{S - b(\delta)\}^2
			- 4K \{D-\delta\}\{b(D) - b(\delta)\} \\
			&\quad {} + 4K \{D-\delta\}\{S - b(\delta)\}
			- 4K \{b(D) - b(\delta)\}\{S - b(\delta)\}.
		\end{align*}
		We know that $S$ is independent of $D$, that $\EE[S] = b(\delta)$, and $\EE[b(D)] = b(\delta)$, thanks to Proposition \ref{prop:EbXiXj}. Thus, the last two terms have expected value equal to 0. Taking expectation and cancelling $\EE[\{D-\delta\}^2]$ and $4K$, yields
		\[
		\EE\left[ \{D-\delta\} \{b(D) - b(\delta)\} \right]
		= K \var(b(D)) + K \var(S).
		\]
		Since $b(\cdot)$ is increasing, we can add absolute values inside the expectation on the left-hand side. Since $\var(S) \leq \var(b(D)) + \var(S) \leq 2$, the result follows.
	\end{proof}
	
	\begin{remark}
		\label{rmk:lem:EXrhobXrho}
		Lemma \ref{lem:EXrhobXrho} can be interpreted as follows. For any $\gamma>0$, consider the function $g_\gamma(u) := |u-\gamma||b(u)-b(\gamma)|$; it is a measure of how far $u$ is from $\gamma$. Thus, Lemma \ref{lem:EXrhobXrho} says that when comparing a random difference of ratings $X^I - X^J$ against the corresponding difference of true skills $\rho^I - \rho^J$ using this notion, the expected value is of order exactly $K$. Note that, for typical choices of the function $b(\cdot)$ (for instance, the logistic \eqref{eq:logistic}), $g_\gamma(\cdot)$ is approximately quadratic around $\gamma$, and linear away from it. See Section \ref{sec:numerics:expectation_K} for numeric results in this direction.
	\end{remark}
	
	The following result is an application of Lemma \ref{lem:EXrhobXrho}. It provides an estimate of order $\sqrt{K}$ for the expectation of $|X^I - X^J - \rho^I + \rho^J|$:
	
	\begin{lemma}
		\label{lem:EXrho}
		Let $X \sim \pi$. Denote $\eta = 1+2\max_i |\rho^i|$. Then, for all $K \leq \ell_\eta/2$, we have
		\[
		\EE\left[|X^I - X^J - \rho^I + \rho^J|\right]
		\leq \sqrt{\frac{8K}{\ell_\eta}}.
		\]
	\end{lemma}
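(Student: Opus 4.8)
The plan is to split the expectation $\EE[|X^I - X^J - \rho^I + \rho^J|]$ according to whether $|X^I - X^J| \le \eta$ or not, and to control each piece by combining Lemma~\ref{lem:EXrhobXrho} with the local invertibility of $b$ encoded in $\ell_\eta$. The point is that Lemma~\ref{lem:EXrhobXrho} bounds $\EE[g(X^I-X^J)]$, where $g(u) := |u - (\rho^I-\rho^J)|\cdot|b(u)-b(\rho^I-\rho^J)|$ as in Remark~\ref{rmk:lem:EXrhobXrho}, but $g$ only controls $|X^I-X^J-\rho^I+\rho^J|$ quadratically near $\rho^I-\rho^J$ and merely linearly far from it; hence a single Cauchy--Schwarz step does not suffice and a case split is needed.

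First I would record two elementary facts about $b$. Since $|\rho^I - \rho^J| \le 2\max_i|\rho^i| = \eta - 1 < \eta$, on the event $A := \{|X^I-X^J| \le \eta\}$ both $X^I - X^J$ and $\rho^I - \rho^J$ lie in $[-\eta,\eta]$, so the definition \eqref{eq:ell} of $\ell_\eta$ gives
\[
    \big|b(X^I-X^J)-b(\rho^I-\rho^J)\big| \ge \ell_\eta\,\big|X^I-X^J-\rho^I+\rho^J\big| .
\]
On the complementary event $A^c = \{|X^I-X^J| > \eta\}$, say $X^I-X^J>\eta$ (the other sign is symmetric), monotonicity of $b$ and \eqref{eq:ell} applied at the endpoint $\eta$ yield $b(X^I-X^J)-b(\rho^I-\rho^J) \ge b(\eta)-b(\rho^I-\rho^J) \ge \ell_\eta(\eta - (\rho^I-\rho^J)) \ge \ell_\eta(\eta-(\eta-1)) = \ell_\eta$. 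Thus $|b(X^I-X^J)-b(\rho^I-\rho^J)| \ge \ell_\eta$ on $A^c$, and consequently $|X^I-X^J-\rho^I+\rho^J| \le g(X^I-X^J)/\ell_\eta$ there.

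Next I would estimate the two pieces. On $A$, the first fact gives $(X^I-X^J-\rho^I+\rho^J)^2 \le g(X^I-X^J)/\ell_\eta$, so by Cauchy--Schwarz and Lemma~\ref{lem:EXrhobXrho},
\[
    \EE\big[|X^I-X^J-\rho^I+\rho^J|\,\ind_A\big]
    \le \sqrt{\EE\big[(X^I-X^J-\rho^I+\rho^J)^2\ind_A\big]}\,\sqrt{\PP(A)}
    \le \sqrt{\tfrac{1}{\ell_\eta}\EE[g(X^I-X^J)]}
    \le \sqrt{\tfrac{2K}{\ell_\eta}} .
\]
On $A^c$, the second fact together with Lemma~\ref{lem:EXrhobXrho} gives $\EE[|X^I-X^J-\rho^I+\rho^J|\ind_{A^c}] \le \ell_\eta^{-1}\EE[g(X^I-X^J)] \le 2K/\ell_\eta$. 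Adding the two contributions and using that $K \le \ell_\eta/2$ forces $2K/\ell_\eta \le 1$, hence $2K/\ell_\eta \le \sqrt{2K/\ell_\eta}$, we obtain $\EE[|X^I-X^J-\rho^I+\rho^J|] \le 2\sqrt{2K/\ell_\eta} = \sqrt{8K/\ell_\eta}$.

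The only step requiring genuine care is the far regime $A^c$: the quadratic control coming from $\ell_\eta$ is valid only on the compact window $[-\eta,\eta]$, so one must separately verify — using monotonicity of $b$ and the fact that $\rho^I-\rho^J$ stays strictly inside that window by choice of $\eta$ — that $b$ has moved by at least $\ell_\eta$ once $X^I-X^J$ exits it. Everything else is a routine application of Lemma~\ref{lem:EXrhobXrho} and the Cauchy--Schwarz inequality.
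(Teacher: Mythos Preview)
Your proof is correct and reaches the stated bound, but via a different decomposition from the paper's. Writing $D=X^I-X^J$ and $\delta=\rho^I-\rho^J$, the paper splits according to whether $|D-\delta|<\epsilon$ or $|D-\delta|\ge\epsilon$ for a free parameter $\epsilon\le 1$: the near piece is trivially $\le\epsilon$, while on the far piece monotonicity gives $|b(D)-b(\delta)|\ge\min\{b(\delta+\epsilon)-b(\delta),\,b(\delta)-b(\delta-\epsilon)\}\ge\epsilon\ell_\eta$ (since $\delta\pm\epsilon\in[-\eta,\eta]$), so Lemma~\ref{lem:EXrhobXrho} yields $\EE[|D-\delta|]\le\epsilon+2K/(\epsilon\ell_\eta)$; optimizing $\epsilon=\sqrt{2K/\ell_\eta}$ (which is $\le1$ exactly when $K\le\ell_\eta/2$) gives the result. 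You instead fix the split at $|D|\le\eta$, exploit the full quadratic lower bound $g(D)\ge\ell_\eta(D-\delta)^2$ on the near piece together with Cauchy--Schwarz, and on the far piece use the uniform bound $|b(D)-b(\delta)|\ge\ell_\eta$ that follows from $|\delta|\le\eta-1$. Your route avoids the optimization parameter at the price of one Cauchy--Schwarz; the paper's avoids Cauchy--Schwarz but introduces the tunable threshold. Both arguments rest identically on Lemma~\ref{lem:EXrhobXrho} and on the fact that $\delta$ sits one unit inside $[-\eta,\eta]$, and both invoke $K\le\ell_\eta/2$ only at the very last step to absorb the linear term into the square-root term.
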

	
	\begin{proof}
		Again, we assume $N=2$, the proof for the general case is a straightforward extension. As in the proof of Lemma \ref{lem:EXrhobXrho}, denote $D=X^1-X^2$ and $\delta = \rho^1-\rho^2$. Let $\epsilon \leq 1$. Note that if $|u-\delta| \geq \epsilon$, since $b(\cdot)$ is non-decreasing, we have
		\[
		|b(u) - b(\delta)|
		\geq \min\{ b(\delta+\epsilon)-b(\delta), b(\delta)-b(\delta-\epsilon) \}
		\geq \epsilon \ell_\eta ,
		\]
		thanks to Assumption \ref{ass:b}. Consequently:
		\begin{align*}
			\EE[|D-\delta|]
			&= \EE[|D-\delta|\ind_{\{ |D-\delta| < \epsilon \}} ]
			+ \EE[|D-\delta|\ind_{\{ |D-\delta| \geq \epsilon \}} ] \\
			&\leq \epsilon
			+ \frac{1}{\epsilon \ell_\eta} \EE[|D-\delta| |b(D)-b(\delta)| ] \\
			&\leq \epsilon + \frac{2K}{\epsilon \ell_\eta},
		\end{align*}
		where we have used Lemma \ref{lem:EXrhobXrho}. Choosing $\epsilon = \sqrt{2K/\ell_\eta}$, we have $\epsilon \leq 1$ when $K \leq \ell_\eta/2$, and the result follows.
	\end{proof}

	As a consequence, we obtain the following estimate of order $\sqrt{K}$ for $\EE[\frac{1}{N} |X - \rho|_1]$. This is a refined version of Theorem \ref{thm:main:stationary_sqrtK}:
	
	\begin{theorem}[convergence of $\pi$ to the true skills]
		\label{thm:stationary_sqrtK}
		Let $X\sim\pi$. Denote $\eta = 1+2\max_i |\rho^i|$. Then, for all $K\leq \ell_\eta/2$, we have
		\[
		\EE\left[\frac{1}{N} |X - \rho|_1 \right]
		\leq \frac{N-1}{N} \sqrt{\frac{8K}{\ell_\eta}}.
		\]
	\end{theorem}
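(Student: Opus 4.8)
The plan is to deduce Theorem \ref{thm:stationary_sqrtK} from Lemma \ref{lem:EXrho} by averaging over all pairs and then passing from pairwise differences to the $1$-norm via Lemma \ref{lem:normx}. First I would recall that $(I,J)$ is chosen uniformly among ordered pairs of distinct indices, so that for any array $(c_{ij})_{i\neq j}$ we have $\EE[c_{IJ}] = \frac{1}{N(N-1)}\sum_{i\neq j} c_{ij}$; applying this to $c_{ij} = |X^i - X^j - \rho^i + \rho^j|$ and using Lemma \ref{lem:EXrho} gives
\[
    \frac{1}{N(N-1)} \sum_{\substack{i,j=1 \\ i\neq j}}^N \EE\left[ |X^i - X^j - \rho^i + \rho^j| \right]
    \leq \sqrt{\frac{8K}{\ell_\eta}},
\]
valid for $K \leq \ell_\eta/2$. (Here I should double-check that the constant $\eta = 1 + 2\max_i|\rho^i|$ in Lemma \ref{lem:EXrho}, which was derived for $N=2$ using $\delta = \rho^1-\rho^2$ with $|\delta|\leq 2\max_i|\rho^i|$, is indeed the correct uniform-in-$N$ choice for the general pair $\rho^i-\rho^j$; since $|\rho^i - \rho^j| \leq 2\max_k |\rho^k|$ for all $i,j$, the same $\eta$ works, and one uses $\ell_{|\delta|+\epsilon} \geq \ell_\eta$.)

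Next I would apply the triangle-type bound from Lemma \ref{lem:normx}: since $X - \rho \in \Z_N$ (both $X$ and $\rho$ are zero-sum), we have $|X-\rho|_1 \leq \frac{1}{N}\sum_{i,j} |X^i - \rho^i - X^j + \rho^j|$. Taking expectations, dividing by $N$, and noting that the $i=j$ terms vanish, the right-hand side becomes $\frac{1}{N^2}\sum_{i\neq j}\EE[|X^i-X^j-\rho^i+\rho^j|] = \frac{N-1}{N}\cdot\frac{1}{N(N-1)}\sum_{i\neq j}\EE[\cdots]$, and combining with the displayed bound above yields
\[
    \EE\left[\frac{1}{N}|X-\rho|_1\right]
    \leq \frac{N-1}{N}\sqrt{\frac{8K}{\ell_\eta}},
\]
which is exactly the claim.

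There is no serious obstacle here — the theorem is essentially a bookkeeping corollary of Lemma \ref{lem:EXrho} — but the one point requiring a little care is the combinatorial accounting of the $\frac{1}{N}$, $\frac{1}{N^2}$, and $\frac{1}{N(N-1)}$ factors so that the final constant comes out as the stated $\frac{N-1}{N}$ rather than something off by a factor; the cleanest bookkeeping is to write $\EE[\frac1N|X-\rho|_1] \le \frac{1}{N^2}\sum_{i\ne j}\EE[|X^i-X^j-\rho^i+\rho^j|] = \frac{N-1}{N}\,\EE\big[|X^I-X^J-\rho^I+\rho^J|\big]$ and then invoke Lemma \ref{lem:EXrho} in one stroke. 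I would also remark (as elsewhere in the paper) that the bound is uniform in $N$ since $\frac{N-1}{N} < 1$, recovering the form $C\sqrt{K}$ of point \ref{thm:main:stationary_sqrtK} with $C = \sqrt{8/\ell_\eta}$.
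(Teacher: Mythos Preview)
Your proposal is correct and follows essentially the same approach as the paper: apply Lemma \ref{lem:normx} to the zero-sum vector $X-\rho$ to bound $|X-\rho|_1$ by the sum of pairwise differences, recognize that sum as $N(N-1)\,\EE[|X^I-X^J-\rho^I+\rho^J|]$, and invoke Lemma \ref{lem:EXrho}. The only cosmetic difference is the order in which you present the two steps, and your extra sanity check on $\eta$ is a helpful remark the paper leaves implicit.
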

	
	\begin{proof}
		Since $X - \rho \in \Z_N$, thanks to Lemma \ref{lem:normx}, we have
		\begin{align*}
			N \EE[|X - \rho|_1]
			&\leq \EE\left[\sum_{i,j=1}^N |X^i - X^j - \rho^i + \rho^j| \right] \\
			&= N(N-1) \EE\left[|X^I - X^J - \rho^I + \rho^J|\right].
		\end{align*}
		Applying Lemma \ref{lem:EXrho}, the result follows.
	\end{proof}
	
	The remainder of this section is devoted to showing that $\supp(\pi) = \Z_N$. Since the proof requires several preliminary lemmas, we outline our general strategy first:
	
	\begin{itemize}
		\item For the simpler case of $N=2$ players and binary scores, i.e., scores in $\{-1,1\}$, show that, starting from any non-random point, the rating of one player can attain any open interval.
		
		\item Generalize for any $N\geq 2$, still for binary scores. That is, show that the ratings can attain any subset of $\Z_N$ which is open in the relative topology.
		
		\item Extend this to general scores in $[-1,1]$. Assumption \ref{ass:suppS} about the support of the scores containing $\{-1,1\}$ will be essential.
		
		\item Finally, deduce $\supp(\pi)=\Z_N$ as a consequence.
	\end{itemize}
	
	We start by showing that for $N=2$ players and binary scores, the ratings can become arbitrarily large, both towards $-\infty$ and $+\infty$. To this end, let us introduce the notation
	\[
	\E_-(u) = u + K \{ -1 - b(2u) \},
	\qquad
	\E_+(u) = u + K \{ 1 - b(2u) \}.
	\]
	For $t\in\NN$, denote $\E_-^t = \E_- \circ \cdots \circ \E_-$, so that $\E_-^t(u)$ is the rating of a player that started from $u\in\RR$ and then lost $t$ games in a row against a player that started from $-u$. Similarly for $\E_+^t(u)$.
	
	\begin{lemma}
		\label{lem:elo^t}
		For all $u\in\RR$,
		\[
		\lim_{t \to \infty} \E_-^t(u) = -\infty,
		\qquad
		\lim_{t \to \infty} \E_+^t(u) = \infty.
		\]
	\end{lemma}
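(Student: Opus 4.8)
The plan is to analyze the one-dimensional map $\E_-(u) = u + K\{-1 - b(2u)\}$ and show that the orbit $\E_-^t(u)$ diverges to $-\infty$; the statement for $\E_+$ follows by the symmetry $\E_+(u) = -\E_-(-u)$, which holds because $b$ is odd. First I would observe that $\E_-$ is a strictly increasing function of $u$: its ``derivative'' in the discrete sense satisfies $\E_-(u) - \E_-(v) = (u-v) - K\{b(2u) - b(2v)\}$, and since $b$ is $L$-Lipschitz and $2KL < 2$ — here one uses $KL < 1$ from \ref{ass:KL} — we get $\E_-(u) - \E_-(v) \geq (1 - 2KL)(u-v) > 0$ for $u > v$. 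Wait, I should be careful: $b(2u)-b(2v) \le 2L(u-v)$ only gives monotonicity if $2KL<1$, which need not hold. A cleaner route: $b$ is increasing, so $b(2u)-b(2v)\ge 0$ has the same sign as $u-v$, hence $\E_-(u)-\E_-(v)$ lies between $(1-2KL)(u-v)$ and $(u-v)$; under \ref{ass:KL} we indeed have $2KL$ possibly $\ge 1$, so monotonicity is not automatic. I would instead argue directly with the sign of the increment.

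The key quantity is the increment $\E_-(u) - u = -K\{1 + b(2u)\}$. Since $b$ takes values in $(-1,1)$, we have $1 + b(2u) > 0$ for every $u$, so $\E_-(u) - u < 0$ for all $u$: the orbit $(\E_-^t(u))_t$ is strictly decreasing. Being monotone, it has a limit $L_* \in [-\infty, u)$. If $L_* > -\infty$, then by continuity of $\E_-$ we would need $\E_-(L_*) = L_*$, i.e. $1 + b(2L_*) = 0$, i.e. $b(2L_*) = -1$; but $b$ takes values strictly inside $(-1,1)$, so this is impossible. Therefore $L_* = -\infty$, which is exactly $\lim_t \E_-^t(u) = -\infty$. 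The argument for $\E_+$ is identical: $\E_+(u) - u = K\{1 - b(2u)\} > 0$ since $b(2u) < 1$, so the orbit is strictly increasing, and a finite limit $L_*$ would force $b(2L_*) = 1$, again impossible.

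There is essentially no obstacle here — the main point is simply that $b$ is valued in the \emph{open} interval $(-1,1)$, so the ``drift'' $\pm K\{1 \mp b(2u)\}$ never vanishes and keeps a fixed sign, ruling out convergence to a finite fixed point. The only mild subtlety is that the decrement $-K\{1+b(2u)\}$ can become arbitrarily small in magnitude as $u \to -\infty$ (since $b(2u) \to -1$), so one must not expect a uniform bound on the step size; the monotone-bounded-sequence argument sidesteps this entirely by using only the strict sign of the increment together with the absence of a fixed point, rather than any quantitative lower bound on $|\E_-(u)-u|$.
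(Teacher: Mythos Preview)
Your argument is correct and matches the paper's approach: both observe that $b$ being valued in the open interval $(-1,1)$ makes the orbit strictly monotone, and then rule out a finite limit (you via the nonexistence of a fixed point, the paper via a uniform lower bound $K\{1-b(2u_\infty)\}>0$ on the increments---equivalent arguments). Your first paragraph on monotonicity of the map $\E_-$ itself is an irrelevant detour, as you correctly realize; only the sign of the increment $\E_\pm(u)-u$ is needed.
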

	
	\begin{proof}
		Let's prove the assertion for $\E_+$, the other one being analogous. Since $b(\cdot) < 1$, the sequence $(\E_+^t(u))_{t\in\NN}$ is increasing. Assuming that it converges to some finite value $u_\infty$, we have
		\[
		\E_+^{t}(u) - \E_+^{t-1}(u)
		= K \{1-b(2 \E_+^{t-1}(u))\}
		\geq K \{ 1-b(2u_\infty) \}
		>0.
		\]
		Thus, $\E_+^{t}(u) \geq u + t K \{ 1-b(2u_\infty) \}$, then $\lim_t \E_+^{t}(u) = \infty$, which is a contradiction.
	\end{proof}

	\begin{lemma}
		\label{lem:expansivity-of-inverse-elo-maps}
		For any non-empty interval $\I$ there exists $\eta = \eta(\I) > 0$ such that for all $u,v \in \I$
		\[
		\min \{ |\E_-^{-1}(u) - \E_-^{-1}(v)|, |\E_+^{-1}(u) - \E_+^{-1}(v)| \}
		\geq (1+\eta) |u-v|.
		\]
	\end{lemma}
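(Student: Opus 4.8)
The plan is to show that each forward map $\E_\pm$ is a strict contraction, by a definite factor $\theta<1$ depending on $\I$, on the (bounded) set of preimages of $\I$, and then simply invert this. I first record two preliminaries. Since $b$ takes values in $(-1,1)$, we have $|\E_\pm(u)-u|\le 2K$ for every $u\in\RR$, so $\E_\pm$ is continuous with $|\E_\pm(u)|\to\infty$ as $|u|\to\infty$; in particular $\E_\pm:\RR\to\RR$ is onto, and every preimage under $\E_\pm$ of a point of a bounded interval $\I$ lies in $[-M,M]$ with $M:=2K+\sup_{y\in\I}|y|<\infty$. (When $KL\ge\tfrac12$ the maps $\E_\pm$ need not be monotone, so $\E_\pm^{-1}$ should be read as an arbitrary choice of preimage; the estimate below holds for every such choice. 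For unbounded $\I$ the bound degenerates as $\ell_{2M}\to 0$, but in the applications $\I$ is always a bounded open interval.)

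Next I fix $u,v\in\I$ and preimages $u_1,v_1\in[-M,M]$ under $\E_+$, and may assume $u_1>v_1$ (otherwise $u=v$ and there is nothing to prove). Put $\Delta u=u_1-v_1>0$ and $\Delta b=b(2u_1)-b(2v_1)$. Monotonicity of $b$ gives $\Delta b\ge 0$; the definition \eqref{eq:ell} of $\ell_{2M}$, applied with arguments $2u_1,2v_1\in[-2M,2M]$, gives $\Delta b\ge 2\ell_{2M}\Delta u$; and the $L$-Lipschitz bound gives $\Delta b\le 2L\Delta u$. Since $u-v=\E_+(u_1)-\E_+(v_1)=\Delta u-K\Delta b$, these inequalities yield
\[
u-v\in\big[(1-2KL)\,\Delta u,\ (1-2K\ell_{2M})\,\Delta u\big],
\qquad\text{hence}\qquad
|u-v|\le\theta\,\Delta u,
\]
where $\theta:=\max\{\,|1-2KL|,\ |1-2K\ell_{2M}|\,\}$. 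The constant term $+1$ (resp.\ $-1$ for $\E_-$) cancels in the difference, so exactly the same bound holds for $\E_-$.

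The crucial point is that $\theta<1$. Indeed, by assumption \ref{ass:KL} we have $2KL\in(0,2)$, so $|1-2KL|<1$; and by assumption \ref{ass:b} we have $0<\ell_{2M}\le L$, so $2K\ell_{2M}\in(0,2KL]\subseteq(0,2)$ and therefore $|1-2K\ell_{2M}|<1$ as well. Consequently $|\E_\pm^{-1}(u)-\E_\pm^{-1}(v)|=\Delta u\ge\theta^{-1}|u-v|$, and the lemma follows with $\eta=\eta(\I):=\theta^{-1}-1>0$, which depends on $\I$ only through $M$ (hence through $\ell_{2M}$).

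The main obstacle is conceptual rather than computational: the bare Lipschitz estimate only shows that $\E_\pm$ is non-expansive, and $\E_\pm$ may genuinely fail to be injective, so there is no contraction in general. The remedy is precisely the restriction to bounded $\I$: confining the arguments of $b$ to $[-2M,2M]$ unlocks the strictly positive lower slope $\ell_{2M}$ from assumption \ref{ass:b}, while the standing hypothesis $KL<1$ (assumption \ref{ass:KL}) keeps the ``overshoot'' factor $|1-2KL|=2KL-1$ strictly below $1$; these two facts together force $\theta<1$.
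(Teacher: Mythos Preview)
Your argument is correct and follows essentially the same route as the paper: bound the preimages in a compact interval (using $|\E_\pm(u)-u|\le 2K$), then sandwich $u-v=\Delta u - K\Delta b$ between $(1-2KL)\Delta u$ and $(1-2K\ell)\Delta u$ via the Lipschitz and lower-slope bounds on $b$, and conclude that the contraction factor $\theta=\max\{|1-2KL|,|1-2K\ell|\}$ is strictly less than $1$ because $KL<1$. The constants differ only in bookkeeping (the paper sets $M=2\max\{|\ba-2K|,|\bb+2K|\}$ and uses $\ell_M$, you set $M=2K+\sup_{\I}|y|$ and use $\ell_{2M}$), and you are somewhat more careful than the paper in noting that $\E_\pm$ need not be injective when $KL\ge\tfrac12$ and that the statement tacitly requires $\I$ bounded.
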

	
	\begin{proof}
		We first prove the lower bound for $\E_+$, the argument for $\E_-$ is analogous. Given $u \geq v$ in $\I$, denote $u_* = \E_+^{-1}(u)$ and $v_* = \E_+^{-1}(v)$. We need to show that $|u-v| \leq \lambda |u_*-v_*|$ for some $\lambda = \lambda(\I) < 1$. Write $\I = (\ba,\bb)$. Note that $\E_+^{-1}$ generates a jump of size at most $2K$, which implies that $u_*, v_* \in (\ba-2K, \bb+2K)$. Set $M = 2 \max\{|\ba-2K|, |\bb+2K|\}$. Note that the quantity
		\begin{align*}
			u-v
			&= u_* + K \{ 1-b(2 u_*) \} - v_* - K \{ 1 - b(2v_*) \} \\
			&= u_* - v_* - K \{ b(2 u_*) - b(2 v_*) \}
		\end{align*}
		lies in the interval $[(1-2KL)(u_* - v_*), (1-2 K \ell_M)(u_* - v_*)]$, because
		\[
		2 \ell_M (u_* - v_*)
		\leq b(2 u_*) - b(2 v_*)
		\leq 2L (u_* - v_*).
		\]
		Consequently, for $\lambda = \max\{|1-2KL|, |1-2 K \ell_M| \}$, we obtain $|u-v| \leq \lambda |u_*-v_*|$. Since $1-2KL>-1$, we have $\lambda < 1$, which concludes the proof.
	\end{proof}

	\begin{lemma}
		\label{lem:path_ab}
		Let $\ba < \bb$ and $u \in \RR$. Then there exist $t\in \NN$ and a path $\tau = \E_t \circ \cdots \circ \E_1$, where each $\E_k$ is either  $\E_-$ or $\E_+$, such that $\tau(u) \in (\ba,\bb)$.
	\end{lemma}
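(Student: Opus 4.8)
The strategy is to first get into a bounded region and then cover the target interval by expanding preimages. By Lemma \ref{lem:elo^t}, starting from $u$ we can apply $\E_+$ repeatedly until we land somewhere to the right of $\bb$, and then apply $\E_-$ repeatedly until we land somewhere to the left of $\ba$ (or vice versa). The key observation is that each application of $\E_-$ or $\E_+$ moves the rating by at most $2K$ (since $S \in \{-1,1\}$ and $b \in (-1,1)$, the increment $K\{S - b(2u)\}$ has absolute value at most $2K$). So once the iterated image first crosses below $\ba$, the previous value was still $\geq \ba$, meaning at that point the rating lies in $[\ba - 2K, \ba)$. Similarly, just before first crossing above $\bb$ we are in $(\bb, \bb+2K]$. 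Hence, possibly enlarging $(\ba,\bb)$ to $(\ba - 2K, \bb + 2K)$, it suffices to prove the following: for any $\ba < \bb$ and any $u_0 \in [\bb, \bb+2K]$, there is a path of $\E_\pm$ steps from $u_0$ into $(\ba,\bb)$ — and symmetrically from the left endpoint.

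\textbf{Main step via expansion of inverses.} Suppose we have reached a point $u_1 > \bb$ (close to $\bb$) and a point $u_2 < \ba$ (close to $\ba$), using the moves described. Now I would run the argument in reverse, exploiting Lemma \ref{lem:expansivity-of-inverse-elo-maps}: the inverse maps $\E_-^{-1}, \E_+^{-1}$ are uniformly expanding by a factor $1+\eta$ on any fixed bounded interval. Concretely: pick a large bounded interval $\I$ containing $[\ba, \bb]$ together with all the relevant iterates (this is possible because only finitely many steps are involved, and $\E_\pm$ have jumps bounded by $2K$); let $\eta = \eta(\I)$. Starting from the image point produced by the forward path, suppose it is not yet in $(\ba,\bb)$. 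The point is that among the two choices $\E_-$ and $\E_+$ applied at the current location, at least one moves the rating \emph{toward} $(\ba,\bb)$: indeed $\E_+(u) - \E_-(u) = 2K > 0$, so if the current value is $\geq \bb$ we apply $\E_-$ (which, being $\E_+^{-1}$-expansion in reverse... ) — hmm, let me instead argue directly. Since $\E_+(u) = u + K(1 - b(2u))$ and $\E_-(u) = u - K(1 + b(2u))$, and $1 \pm b(2u) \in (0,2)$, each map moves by a bounded positive-or-negative amount; the cleanest route is: use that the forward iterates of $\E_-$ (resp. $\E_+$) form a monotone sequence diverging to $-\infty$ (resp. $+\infty$) by Lemma \ref{lem:elo^t}, so the sequence of iterates starting at any point passes through \emph{every} interval of length $> 2K$ along the way. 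Thus applying $\E_-$ repeatedly from $u_1 > \bb$, the iterates decrease continuously-in-steps-of-size-$\leq 2K$ all the way to $-\infty$; the first time the iterate drops below $\bb$, it is in $(\bb - 2K, \bb)$. If $\bb - 2K < \ba$, i.e.\ if $\bb - \ba > 2K$, this lands in $(\ba,\bb)$ and we are done. If the interval $(\ba,\bb)$ is shorter than $2K$, this crude argument can overshoot, and that is exactly where Lemma \ref{lem:expansivity-of-inverse-elo-maps} is needed.

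\textbf{Handling short target intervals via inverse-map expansion.} For a short interval $(\ba,\bb)$, enlarge it: choose $\bb' \geq \bb$ with $\bb' - \ba$ large (say $> 2K$), set $\I = (\ba - 1, \bb' + 1)$ and $\eta = \eta(\I)$, and fix $n$ large enough that $(1+\eta)^n (\bb - \ba) > \bb' - \ba$, equivalently $(1+\eta)^{-n}(\bb' - \ba) < \bb - \ba$. Consider the interval $J_0 = (\ba, \bb)$ and its forward images under a suitably chosen composition of $\E_\pm$: I claim one can choose $\E^{(1)}, \dots, \E^{(n)} \in \{\E_-, \E_+\}$ so that $\E^{(n)} \circ \cdots \circ \E^{(1)}$ maps $J_0$ onto an interval of length $> 2K$ lying inside $\I$. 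Granting this, the first (long) argument applies: from the given $u$, reach via Lemma \ref{lem:elo^t} a point inside that long image interval $J_n := \E^{(n)} \circ \cdots \circ \E^{(1)}(J_0)$ (possible since it has length $> 2K$ and the monotone divergent iterates sweep past it); then apply the inverse maps $(\E^{(1)})^{-1}, \dots, (\E^{(n)})^{-1}$ in reverse order to land back in $J_0 = (\ba,\bb)$. To build the claimed composition: at each stage the current interval $J_k \subseteq \I$ has some endpoints; applying $\E_+$ or $\E_-$ is the inverse of applying $\E_+^{-1}$ or $\E_-^{-1}$, and by Lemma \ref{lem:expansivity-of-inverse-elo-maps} the map $\E_\pm^{-1}$ stretches lengths by at least $1+\eta$ on $\I$, hence $\E_\pm$ \emph{contracts} by at least $1+\eta$ — that is the wrong direction. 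So instead I should set things up dually: apply Lemma \ref{lem:elo^t} to reach a point in $(\ba, \bb)$ \emph{directly} whenever $\bb - \ba > 2K$; and for the short case, observe that $\E_+^{-1}$ maps $(\ba,\bb)$ to an interval of length $\geq (1+\eta)(\bb - \ba)$ still inside $\I$ (as long as iterates stay in $\I$, controllable since jumps are $\leq 2K$). Iterating $\E_+^{-1}$ (or a mix) $n$ times produces an interval $\tilde J \subseteq \I$ with $|\tilde J| > 2K$, and $\tilde J = \E^{(n)} \circ \cdots \circ \E^{(1)}((\ba,\bb))$ where each $\E^{(k)}$ is $\E_+$ (the forward map). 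Wait — $\E_+^{-1}$ applied to $(\ba,\bb)$ gives $\E_+^{-1}((\ba,\bb))$, whose \emph{forward} image under $\E_+$ is $(\ba,\bb)$; so $\tilde J$, defined as the $n$-fold $\E_+^{-1}$-preimage of $(\ba,\bb)$, satisfies $\E_+^{n}(\tilde J) = (\ba,\bb)$. Now from $u$, reach a point $w \in \tilde J$ using Lemma \ref{lem:elo^t} (monotone $\E_+$- or $\E_-$-iterates sweep past the long interval $\tilde J$), then apply $\E_+$ exactly $n$ times: $\E_+^n(w) \in \E_+^n(\tilde J) = (\ba, \bb)$. The full path is the concatenation. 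The main obstacle — and the only genuinely delicate point — is ensuring the $\E_+^{-1}$-preimages $\tilde J = \E_+^{-n}((\ba,\bb))$ remain within a fixed bounded interval $\I$ so that the expansion constant $\eta = \eta(\I)$ stays uniform across all $n$ iterations; this holds because $\E_+^{-1}$ has jump size $\leq 2K$, so $\tilde J \subseteq (\ba - 2nK, \bb)$ — but that window grows with $n$, so I must instead bound $n$ first: the required $n$ satisfies $(1+\eta)^n > 2K/(\bb-\ba)$ which depends on $\eta = \eta(\I)$ which depends on $n$... this circularity is broken by taking $\I$ large but \emph{fixed in advance} (e.g.\ depending only on $\ba,\bb,K$ through a one-shot estimate $\I = (\ba - \bb + \ba - 4K, \bb)$ say), reading off $\eta_0 = \eta(\I)$, then choosing $n$ from $\eta_0$, and finally checking $2nK \leq \bb - \ba$ may fail — so one enlarges $\I$ once more to absorb $2nK$. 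Since $n$ depends only on $\eta_0$ and $\bb - \ba$, this terminates after one enlargement, and the argument closes.
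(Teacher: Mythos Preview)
Your overall architecture is right and matches the paper: treat the case $\bb-\ba \geq 2K$ directly via Lemma~\ref{lem:elo^t} (monotone iterates with jump size $<2K$ cannot skip over the interval), and for short intervals, use Lemma~\ref{lem:expansivity-of-inverse-elo-maps} to expand preimages until they have length $\geq 2K$, then reduce to the first case. The gap is in how you handle the short-interval case.

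You correctly identify the circularity: iterating only $\E_+^{-1}$ causes the preimages to drift leftward by up to $2K$ per step, so after $n$ steps they live in an interval of width $\sim 2nK$, and the expansion constant $\eta$ from Lemma~\ref{lem:expansivity-of-inverse-elo-maps} depends on that interval. Your proposed fix---choose $\I_0$, read off $\eta_0$, pick $n$, then enlarge once to $\I_1 \supseteq (\ba-2nK,\bb)$---does not close the loop. On the enlarged interval $\I_1$ the relevant expansion constant is $\eta(\I_1)$, not $\eta_0$; since $\ell_M$ is non-increasing in $M$ (Assumption~\ref{ass:b}), typically $\eta(\I_1) < \eta_0$, so $(1+\eta(\I_1))^n$ may no longer exceed $2K/(\bb-\ba)$. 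Enlarging again forces a larger $n$, and you are back in the loop.

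The paper's resolution is to avoid the drift altogether by \emph{mixing} the two inverse maps. Fix once and for all $\I = [\ba-2K,\bb+2K]$, which has length $>4K$, and let $\eta = \eta(\I)$. At each step, apply $\E_+^{-1}$ if the resulting interval stays in $\I$, and $\E_-^{-1}$ otherwise. Because $\E_+^{-1}$ shifts left and $\E_-^{-1}$ shifts right, each by at most $2K$, and the current interval has length $<2K$, one of the two choices always keeps the preimage inside $\I$. Thus all iterates remain in the fixed interval $\I$, the expansion factor $1+\eta$ is uniform, and after finitely many steps the preimage has length $\geq 2K$. This is the missing idea.
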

	
	\begin{proof}
		First assume $\bb-\ba \geq 2K$. Thanks to Lemma \ref{lem:elo^t}, we know that either the sequence $(\E_-^t(u))_{t\in\NN}$ or $(\E_+^t(u)))_{t\in\NN}$ will eventually hit $(\ba,\bb)$, because the jump size is smaller than $2K$.
		
		We now assume $\bb-\ba<2K$. It suffices to find a path $\gamma$ such that the length of the interval $\gamma^{-1}((\ba,\bb))$ is greater than $2K$. We will generate $\gamma^{-1} = \E_{t_*}^{-1} \circ \cdots \circ \E_1^{-1}$ as follows. Set $\I = [\ba-2K, \bb+2K]$, $\I_0 = (\ba,\bb)$ and $t=1$. Then:
		\begin{itemize}
			
			\item If $\E_+^{-1}(\I_{t-1}) \subseteq \I$, set $\E_t = \E_+$; otherwise $\E_t = \E_-$.
			
			\item Set $\I_t = \E_t^{-1}(\I_{t-1})$.
			
			\item If $\I_t$ has length $2K$ or more, stop.
			
			\item Otherwise, update $t \leftarrow t+1$ and return to the first step.
		\end{itemize}
		Note that both $\E_-^{-1}$ and $\E_+^{-1}$ generate a jump of size at most $2K$, and since the length of $\I$ is bigger than $4K$, then $\I_t \subseteq \I$ at every step of the procedure. Consequently, the length of $\I_t$ is at least $(1+\eta)^t (\bb-\ba)$ thanks to Lemma \ref{lem:expansivity-of-inverse-elo-maps}, where $\eta = \eta(\I) > 0$ is the constant provided by the lemma. This guarantees that eventually the procedure stops at some time $t_*$, with the final interval $\I_{t_*}$ having length $2K$ or more.
	\end{proof}
	
	We now extend the previous lemma to any $N \geq 2$. Recall the notation \eqref{eq:E_alpha}: for $x\in\Z_N$, distinct indices $i,j$ and non-random $s \in [-1,1]$, denote $\E_\alpha(x) \in \Z_N$ the vector obtained after a single step of the Elo algorithm using the information in $\alpha = (i,j,s)$.
	
	\begin{lemma}
		\label{lem:path_U}
		Fix $N\geq 2$ and $x \in \Z_N$. Then, for any non-empty set $U \subseteq \Z_N$ open in the relative topology of $\Z_N$, there exist $t\in\NN$ and a path $\tau = \E_{\alpha_t} \circ \cdots \circ \E_{\alpha_1}$ satisfying $\tau(x) \in U$, where each $\alpha_k = (i_k, j_k, s_k)$ is such that $s_k \in \{-1,1\}$.
	\end{lemma}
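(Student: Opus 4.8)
The idea is to reduce the $N$-player problem to the two-player result of Lemma~\ref{lem:path_ab} by moving one coordinate at a time. Since $U$ is nonempty and open in the relative topology of $\Z_N$, pick a target point $y\in U$ and a small radius $r>0$ so that the relative ball $B(y,r)\cap\Z_N$ is contained in $U$. It suffices to drive $x$ into this ball. I would do this by a finite sequence of ``sweeps'': repeatedly select a fixed pair of players, say $(1,j)$, feed in scores $s\in\{-1,1\}$, and use Lemma~\ref{lem:path_ab} to adjust the gap $X^1-X^j$ — hence the pair of coordinates $(X^1,X^j)$, which move in opposite directions while all other coordinates stay frozen — into a prescribed small interval. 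Iterating over $j=2,\dots,N$ one expects to be able to fix all the coordinate differences, and since the configuration lives in $\Z_N$ (the sum is conserved), fixing all differences fixes the point.

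**Key steps.** First, observe that a step $\E_{(1,j,s)}$ with $s\in\{-1,1\}$ acts on the two coordinates $(x^1,x^j)$ exactly as a two-player Elo step acts on a pair, with $u=\tfrac12(x^1-x^j)$ playing the role of the single rating in Lemmas~\ref{lem:elo^t}--\ref{lem:path_ab} (up to an affine change of variables absorbing the center of mass $\tfrac12(x^1+x^j)$, which is itself conserved by this step). So by Lemma~\ref{lem:path_ab}, for any target interval around $y^1-y^j$ there is a finite admissible path using only the pair $(1,j)$ and scores $\pm1$ that brings $x^1-x^j$ into that interval, while leaving $x^1+x^j$ and every $x^k$, $k\neq 1,j$, unchanged. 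Second, apply this for $j=2$ to land $x^1-x^2$ near $y^1-y^2$; then for $j=3$ to land $x^1-x^3$ near $y^1-y^3$ — crucially noting that touching the pair $(1,3)$ does not disturb $x^2$, and only shifts $x^1$, which in turn shifts $x^1-x^2$; so I must either (a) process the pairs in an order and with tolerances chosen so the later perturbations of $x^1$ are controlled, or (b) after fixing $x^1-x^j$ for all $j\geq 2$, do one final correction. Option (a) is cleanest: since each use of pair $(1,j)$ changes $x^1$ by a controllable amount, I can iterate over $j=2,\dots,N$, at stage $j$ re-reading the current value of $x^1$ and aiming $x^j$ so that $x^1-x^j$ is within $r/N$ of $y^1-y^j$; because pair $(1,j)$ for a \emph{new} $j$ never alters any previously-fixed $x^{j'}$ with $j'<j$, $j'\neq1$, the only drift is in $x^1$, and one checks that at the end all differences $x^1-x^j$ are within $O(r)$ of target. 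Third, conclude: the resulting point $x'$ satisfies $x'^1-x'^j$ close to $y^1-y^j$ for all $j$, and since $\sum_i x'^i=\sum_i y^i=0$, this forces $\|x'-y\|$ small, so $x'\in B(y,r)\cap\Z_N\subseteq U$.

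**Main obstacle.** The delicate point is the bookkeeping of the coupling between coordinates: every adjustment of a difference $x^1-x^j$ perturbs $x^1$, hence every other difference sharing the index $1$. One must make the quantitative claim that these perturbations can be made arbitrarily small — equivalently, that Lemma~\ref{lem:path_ab} can be strengthened to: given a target \emph{point} $w$ and $\varepsilon>0$, there is an admissible $\pm1$-path bringing $u$ within $\varepsilon$ of $w$ (not merely into some fixed interval $(\ba,\bb)$). But this stronger form is immediate from Lemma~\ref{lem:path_ab} itself, applied with the interval $(\ba,\bb)=(w-\varepsilon,w+\varepsilon)$. With that in hand, I would run the sweep over $j=2,\dots,N$ with a prescribed error budget $\varepsilon=\varepsilon(r,N)$ small enough that the accumulated drift in $x^1$ over the remaining stages never exceeds $r/2$, say; then $x'$ lands in the relative ball around $y$. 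I expect the only real work is writing the error-propagation estimate carefully; everything else is a direct appeal to the two-player lemma.
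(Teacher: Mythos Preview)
Your plan has the right architecture --- use one player as a pivot, reduce to the two-player Lemma~\ref{lem:path_ab}, and exploit the zero-sum constraint --- but the error-budget step does not close. The difficulty you flag is real, and your proposed resolution does not address it.

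Concretely: at stage $j$ you play games between players $1$ and $j$ until the \emph{difference} $x^1-x^j$ lands within $\varepsilon$ of $y^1-y^j$. During this stage the pairwise sum $x^1+x^j$ is conserved, so the change in $x^1$ equals the change in $-x^j$, and hence has magnitude roughly $\tfrac12$ times the distance the difference $x^1-x^j$ had to travel. That distance is dictated by the initial data and the target $y$, not by $\varepsilon$. Making the landing interval in Lemma~\ref{lem:path_ab} narrower does nothing to shrink this jump in $x^1$. So when you move on to stage $j+1,\dots,N$, each stage shifts $x^1$ by an amount of order one, which corrupts every previously-fixed difference $x^1-x^{j'}$ by the same order. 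The claim ``one checks that at the end all differences $x^1-x^j$ are within $O(r)$ of target'' is therefore not justified.

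The fix is to control the \emph{non-pivot} coordinates rather than the differences. At stage $j$, read off the conserved quantity $m_j=\tfrac12(x^1+x^j)$ and use Lemma~\ref{lem:path_ab} to land $x^j$ itself in a small interval around $y^j$ (equivalently, aim $x^1-x^j$ at $2m_j-2y^j$). Now later stages never touch $x^j$ again, so $x^2,\dots,x^N$ all end up near $y^2,\dots,y^N$ with no accumulated error; the pivot coordinate $x^1$ is then forced near $y^1$ by the zero-sum constraint, and you are done with no bookkeeping. This is exactly the paper's argument, with player $N$ as pivot: it fixes $x^1,\dots,x^{N-1}$ one at a time via games against player $N$, letting $x^N$ absorb all the drift.
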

	
	\begin{proof}
		Given a collection of open intervals $(\ba^1, \bb^1), \ldots, (\ba^{N-1}, \bb^{N-1})$, we must show that there exists a path $\tau$ such that for each $i \leq N-1$, the $i$-th component of $\tau(x)$ belongs to $(\ba^i, \bb^i)$. The idea is to apply Lemma \ref{lem:path_ab} using the $N$-th player as a pivot, in order to bring the rating of every other player to the target interval.
		
		Start with player $i=1$. Denote $m = (x^1+x^N)/2$, set $u = x^1 - m$ and $v = x^N - m$, thus $u+v = 0$. Thanks to Lemma \ref{lem:path_ab}, there exists a path (of $\E_-$ and $\E_+$) that brings $u$ to the interval $(\ba^1 - m, \bb^1 - m)$. Since $2u = u - v = x^1 - x^N$, this path has an equivalent path $\tau_1$ of operators $\E_\alpha$ with $\alpha$ of the form $(1,N,s)$ and $s\in\{-1,1\}$, that brings $x^1$ to the interval $(\ba^1,\bb^1)$.
		
		We can now repeat this argument for $x^2$ and the $N$-th component of $\tau_1(x)$, which yields a path $\tau_2$ of games between players $2$ and $N$ that brings $x^2$ to the interval $(\ba^2,\bb^2)$. Repeating this $N-1$ times and setting $\tau = \tau_{N-1} \circ \cdots \circ \tau_1$, the conclusion follows.
	\end{proof}

	The previous lemma is equivalent to saying that, for scores in $\{-1,1\}$, one has $\PP(X_t \in U \mid X_0=x)>0$ for some $t\in\NN$. We now extend this to general scores in $[-1,1]$. Recall that Assumption \ref{ass:suppS} guarantees that for every $\epsilon>0$, one has $\PP(S^{ij} < -1+\epsilon) > 0$ and $\PP(S^{ij} > 1-\epsilon) > 0$, for any distinct $i,j$.

	\begin{lemma}
		\label{lem:PXt_in_U}
		Fix $N\geq 2$ and $x\in\Z_N$. Then, for any non-empty set $U \subseteq \Z_N$ open in the relative topology of $\Z_N$, there exists $t \in \NN$ such that $\PP(X_t \in U \mid X_0 = x)>0$.
	\end{lemma}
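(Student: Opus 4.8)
The plan is to bootstrap from Lemma \ref{lem:path_U}, which already gives the conclusion for the special case of binary scores $s_k\in\{-1,1\}$, to the general case where $S^{ij}$ may be continuously distributed on $[-1,1]$. The key point is that a single Elo step with score $s$ is a continuous function of $s$: writing $F_\alpha(x) = x + K\{s - b(x^i-x^j)\}(e_i-e_j)$, the map $(s,x)\mapsto F_\alpha(x)$ is jointly continuous. Hence a composition $\beta = \E_{\alpha_t}\circ\cdots\circ\E_{\alpha_1}$ of $t$ such steps is a continuous function of the vector of scores $(s_1,\dots,s_t)\in[-1,1]^t$ for fixed player-pairs $(i_1,j_1),\dots,(i_t,j_t)$ and fixed starting point $x$.

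First I would invoke Lemma \ref{lem:path_U} to obtain $t\in\NN$, a sequence of player-pairs $(i_k,j_k)_{k=1}^t$, and scores $s_k^0\in\{-1,1\}$ such that the resulting path $\beta^0$ satisfies $\beta^0(x)\in U$. Since $U$ is open in $\Z_N$ and $\beta^0(x)\in U$, by continuity of the map $(s_1,\dots,s_t)\mapsto \beta(x)$ (with the player-pairs held fixed), there is an open neighbourhood $O\subseteq[-1,1]^t$ of $(s_1^0,\dots,s_t^0)$ such that every score-vector in $O$ produces a path landing in $U$. Because each $s_k^0$ is either $-1$ or $+1$, the neighbourhood $O$ contains a product box $\prod_{k=1}^t J_k$ where each $J_k$ is a one-sided interval of the form $(1-\epsilon,1]$ or $[-1,-1+\epsilon)$ for some $\epsilon>0$. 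By Assumption \ref{ass:suppS}, $\PP(S^{i_kj_k}\in J_k)>0$ for each $k$ — indeed $\PP(S^{i_kj_k}>1-\epsilon)>0$ or $\PP(S^{i_kj_k}<-1+\epsilon)>0$, matching whichever sign $s_k^0$ has, and using $\sigma^{ji}=-\sigma^{ij}$ if the ordered pair needs to be flipped.

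Finally I would lower-bound $\PP(X_t\in U\mid X_0=x)$. The event that the process follows the prescribed pattern — namely that at step $k$ the ordered pair $(I_k,J_k)$ equals $(i_k,j_k)$ and the sampled score $S_k$ falls in $J_k$ — has probability at least
\[
    \prod_{k=1}^t \frac{1}{N(N-1)}\,\PP\!\left(S^{i_kj_k}\in J_k\right)
    > 0,
\]
since the pair is chosen uniformly among the $N(N-1)$ ordered pairs and, conditionally on the pair, the score is an independent sample from $\sigma^{i_kj_k}$, all steps being independent. On this event $X_t = \beta(x)\in U$, which gives $\PP(X_t\in U\mid X_0=x)>0$, as required.

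The main obstacle, such as it is, is purely bookkeeping: making sure the open neighbourhood furnished by continuity can be taken to be a product of the \emph{one-sided} intervals around $\pm1$ that Assumption \ref{ass:suppS} can detect (rather than two-sided intervals, which would require support near an interior point). This works precisely because the binary-score path from Lemma \ref{lem:path_U} uses only the extreme values $s_k^0\in\{-1,1\}$, so each coordinate of the box automatically degenerates to a one-sided interval at the boundary of $[-1,1]$. One should also be slightly careful that the continuity argument is applied with the player-pairs frozen — it is only the scores that are perturbed — but this is exactly the structure of the natural parametrization of a length-$t$ path.
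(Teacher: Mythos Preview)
Your proposal is correct and follows essentially the same approach as the paper: invoke Lemma~\ref{lem:path_U} to get a binary-score path landing in $U$, use continuity of the composed map in the score variables to obtain a product neighbourhood of the extreme score vector still landing in $U$, and then apply Assumption~\ref{ass:suppS} together with independence to show this neighbourhood is hit with positive probability. The only cosmetic difference is that the paper writes the product box as $\prod_k (s_k-\epsilon,s_k+\epsilon)\cap[-1,1]$ rather than explicitly as one-sided intervals, but the content is identical.
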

	
	\begin{proof}
		By Lemma \ref{lem:path_U}, there exists a path $\tau = \E_{\alpha_t} \circ \cdots \circ \E_{\alpha_1}$ satisfying $\tau(x) \in U$, where each $\alpha_k = (i_k, j_k, s_k)$ is such that $s_k \in \{-1,1\}$. The idea is to \emph{broaden} this sequence of games into a set of paths with positive probability. Since $x\in\Z_N$ is fixed, and fixing the indices $i_1,j_1,\ldots,i_t,j_t$, we can write $\tau(x)$ as a function of $s_1,\ldots,s_t$. Specifically, let $f: [-1,1]^t \to \Z_N$ be the function
		\[
		f(r_1,\ldots,r_t)
		= (\E_{(i_t,j_t,r_t)} \circ \cdots \circ \E_{(i_1,j_1,r_1)})(x),
		\]
		thus $\tau(x) = f(s_1,\ldots,s_t)$. Clearly $f$ is continuous. Thus, since $U$ is open, the pre-image $f^{-1}(U)$ is open in $[-1,1]^t$ and contains the point $(s_1,\ldots,s_t)$. Consequently, there exists $\epsilon>0$ such that the set
		\[
		R_\epsilon = (s_1-\epsilon,s_1+\epsilon) \times \cdots \times (s_t-\epsilon,s_t+\epsilon) \cap [-1,1]^t
		\]
		is contained in $f^{-1}(U)$. Recalling that $I_{k-1},J_{k-1}$ denote the (random) players of game $k$, we thus have
		\[
		\PP(X_t \in U \mid X_0 = x)
		\geq \PP\left( \bigcap_{k=1}^t \left\{ I_{k-1}=i_k, J_{k-1}=j_k, S_{k-1}^{i_k j_k} \in (s_k-\epsilon,s_k+\epsilon) \cap [-1,1] \right\} \right),
		\]
		which is strictly positive thanks to Assumption \ref{ass:suppS}.
	\end{proof}
	
	Finally, we are ready to prove the following result regarding the support of $\pi$:
	
	\begin{theorem}[full support of $\pi$]
		\label{thm:support}
		$\supp(\pi) = \Z_N$.
	\end{theorem}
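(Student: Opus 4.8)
The plan is to show that every point $z \in \Z_N$ lies in $\supp(\pi)$, i.e.\ that $\pi(U) > 0$ for every relatively open neighbourhood $U$ of $z$. The engine is Lemma \ref{lem:PXt_in_U}, which gives, for the fixed starting point $x_0 = (0,\ldots,0) \in \Z_N$ (or any convenient fixed point), some $t \in \NN$ with $\PP(X_t \in U \mid X_0 = x_0) > 0$. So the reachability part is already done; what remains is to transfer positivity of a finite-time transition probability started at a \emph{point} into positivity of the \emph{stationary} mass of $U$.

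First I would invoke Theorem \ref{thm:stationary} (weak convergence): starting the Elo process from $X_0 \equiv x_0$, we have $\Law(X_t) \to \pi$ weakly. Fix $z \in \Z_N$ and a relatively open neighbourhood $U \ni z$; shrink it to a relatively open $U' \ni z$ with $\overline{U'} \subseteq U$ (closure in $\Z_N$). By Lemma \ref{lem:PXt_in_U} there is some finite time $t_0$ with $p_0 := \PP(X_{t_0} \in U' \mid X_0 = x_0) > 0$. The next step is to propagate this forward in time: I claim $\liminf_{t\to\infty} \PP(X_t \in U' \mid X_0 = x_0) > 0$, or at least that this probability does not decay to $0$ along some subsequence. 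The clean way is the natural coupling of Lemma \ref{lem:coupling_non_increasing}: run $(X_t)_{t\ge t_0}$ from $X_{t_0}$ and, on the event $\{X_{t_0} \in U'\}$, couple it with a second copy $(\tilde X_t)_{t \ge t_0}$ started from a point $\tilde x \in U'$; since $\|X_t - \tilde X_t\|$ is a.s.\ non-increasing and $U'$ is a neighbourhood of its interior points, one can arrange $\tilde x$ and the size of $U'$ so that $X_t$ stays in $U$ whenever it started in a small enough sub-ball — more simply, apply Lemma \ref{lem:PXt_in_U} again but started from the \emph{worst-case} reachable point. Actually the most economical route avoids coupling entirely: by the Portmanteau theorem, weak convergence gives $\pi(U) \ge \limsup_t \PP(X_t \in \overline{U'} \mid X_0 = x_0) \ge \liminf_t \PP(X_t \in U' \mid X_0=x_0)$ only once we know the liminf is positive, so we do need one uniformity input.

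The cleanest self-contained argument, then, is: let $\mu_t = \frac1t\sum_{k=0}^{t-1}\Theta^k \delta_{x_0}$ be the Cesàro averages from the proof of Theorem \ref{thm:stationary}; these are tight and every weak limit equals $\pi$ (by uniqueness), so $\mu_t \to \pi$ weakly. By Lemma \ref{lem:PXt_in_U}, $\Theta^{t_0}\delta_{x_0}(U') = p_0 > 0$, and applying Lemma \ref{lem:PXt_in_U} once more with starting points ranging over $U'$ — or just noting that for each $k$, $\Theta^{k}\delta_{x_0}(U)$ is bounded below using the non-expansiveness of the coupling from a fixed reachable point inside $U'$ — gives $\inf_{k \ge t_0}\Theta^k\delta_{x_0}(U) \ge \delta > 0$. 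Hence $\mu_t(U) \ge \frac{1}{t}\sum_{k=t_0}^{t-1}\Theta^k\delta_{x_0}(U) \to \delta$, and by the Portmanteau theorem $\pi(U) \ge \limsup_t \mu_t(\overline{U'}) \ge \delta > 0$ (applying Portmanteau to the closed set $\overline{U'} \subseteq U$). Since $U$ was an arbitrary neighbourhood of an arbitrary $z \in \Z_N$, we get $\supp(\pi) = \Z_N$; the reverse inclusion $\supp(\pi) \subseteq \Z_N$ is immediate since $\pi$ is supported on $\Z_N$ by construction. The main obstacle is precisely the uniform-in-time lower bound $\inf_{k\ge t_0}\Theta^k\delta_{x_0}(U)\ge\delta$: one must use that once the chain enters the small ball $U'$ it cannot escape the slightly larger $U$ too quickly, which follows from Lemma \ref{lem:coupling_non_increasing} (distances to a fixed comparison trajectory started inside $U'$ are non-increasing), combined with a second application of the reachability Lemma \ref{lem:PXt_in_U} from points of $U'$ to handle the first $t_0$ steps. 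I expect everything else to be routine Portmanteau bookkeeping.
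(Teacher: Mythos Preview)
Your proposal has a genuine gap, and the paper's argument avoids it by a much simpler route.

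The gap is in the step you yourself flag as ``the main obstacle'': the uniform-in-time lower bound $\inf_{k\ge t_0}\Theta^k\delta_{x_0}(U)\ge\delta>0$. Your sketch for it via Lemma \ref{lem:coupling_non_increasing} does not work. The lemma says that for the natural coupling $\|X_t-\tilde X_t\|$ is non-increasing, but the reference trajectory $(\tilde X_t)$ started from $\tilde x\in U'$ \emph{moves}: it will typically leave $U$ as well, so the bound $\|X_t-\tilde X_t\|\le\operatorname{diam}(U')$ only tells you $X_t\in B(\tilde X_t,\operatorname{diam}(U'))$, which is useless for locating $X_t$ in $U$. Likewise, re-applying Lemma \ref{lem:PXt_in_U} from points of $U'$ gives, for each such point, \emph{some} time at which $U$ is hit with positive probability, not a lower bound valid for all $k\ge t_0$. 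Without this uniform bound, nothing survives the Portmanteau step: if $\pi(U)=0$, then $\limsup_k\Theta^k\delta_{x_0}(\overline{U'})\le\pi(\overline{U'})=0$ by Portmanteau for closed sets, which is perfectly compatible with $\Theta^{t_0}\delta_{x_0}(U')=p_0>0$ at a single time.

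The paper's proof bypasses this entirely by starting the chain from $X_0\sim\pi$ rather than from a fixed point. Then $X_t\sim\pi$ for every $t$, so if $\pi(U)=0$ one has $\PP(X_t\in U)=0$ for all $t$, hence
\[
0=\int_{\Z_N}\sum_{t\ge 0}2^{-t}\PP(X_t\in U\mid X_0=x)\,\pi(dx).
\]
But Lemma \ref{lem:PXt_in_U} says the integrand is strictly positive at \emph{every} $x\in\Z_N$, forcing $\pi\equiv 0$, a contradiction. The point is that Lemma \ref{lem:PXt_in_U} already gives reachability from every starting point, and stationarity lets you use all of them at once; there is no need for any uniform-in-time estimate.
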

	
	\begin{proof}
		Let's argue by contradiction: assume there exists a point in $\Z_N \setminus \supp(\pi)$. Since by definition $\supp(\pi)$ is closed, there exists $U \subseteq \Z_N \setminus \supp(\pi)$ non-empty and open. Now, consider the Elo process $(X_t)_{t\in\NN}$ starting with $X_0 \sim \pi$, thus $X_t \sim \pi$ for all $t\in\NN$. Since $\pi(U)=0$, we thus have $\PP(X_t \in U)=0$ for all $t\in\NN$. Consequently:
		\begin{align*}
			0
			&= \sum_{t\in\NN} 2^{-t} \PP(X_t \in U) \\
			&= \sum_{t\in\NN} 2^{-t} \int_{\Z_N} \PP(X_t \in U \mid X_0=x) \pi(dx) \\
			&= \int_{\Z_N} \sum_{t\in\NN} 2^{-t} \PP(X_t \in U \mid X_0=x) \pi(dx).
		\end{align*}
		Thanks to Lemma \ref{lem:PXt_in_U}, the integrand is strictly positive. This implies that $\pi \equiv 0$, which is a contradiction.
	\end{proof}

	\section{Numerical results}
	\label{sec:numerics}
	
	We performed Monte Carlo simulations to numerically examine relevant features of the stationary distribution $\pi$, in order to illustrate some of the mathematical results of Section \ref{sec:properties_stationary}, and acquire new insights. In this section we present the result of these experiments. We considered:
	\begin{itemize}
		\item $N=2$ players. 
		\item $b(x) = \tanh(Lx)$, for $L=0.5$.
		\item Scores in $\{-1,1\}$.
	\end{itemize}
	This leaves two parameters: $K>0$ and $\rho^1 \in \RR$, and we explored the dependence of $\pi$ on both. In our simulations, one of them will be kept fixed while the other varies along a finite set of values. For every $K$ and $\rho^1$, we:
	\begin{itemize}
		\item Fix a large number of samples $m$.
		
		\item Generate $m$ realizations of the initial condition $X_0^1$, by sampling $m$ independent copies of a given initial distribution on $\RR$ (for instance, uniform on some interval).
		
		\item Run the Elo algorithm independently for the $m$ copies, for a number of steps $t_*$ large enough to reach equilibrium ($t_* = 200$ seems to suffice for most cases).
	\end{itemize}
	Afterwards, we can approximate some feature of interest of $\pi$ by a Monte Carlo procedure. For instance, we estimate an expected value as an average, or compute a histogram to visualize the distribution of $X^1 \sim \operatorname{Proj}_1(\pi)$. We performed four different simulations, which we now proceed to describe.
	
	\subsection{Density for different values of \texorpdfstring{$\rho^1$}{rho-1}}
	\label{sec:numerics:density_rho}
	
	For $K=0.4$ and five different values of $\rho^1$ ranging from $0$ to $1$, we computed and plotted a normalized histogram with $m=5 \times 10^7$ samples per curve, which provides a good approximation for the probability density function of $X^1$, assuming it exists. The resulting plots are shown in Figure \ref{fig:rhos}.
	
	\begin{figure}[t!]
		\centering
		\includegraphics[width=0.5\linewidth]{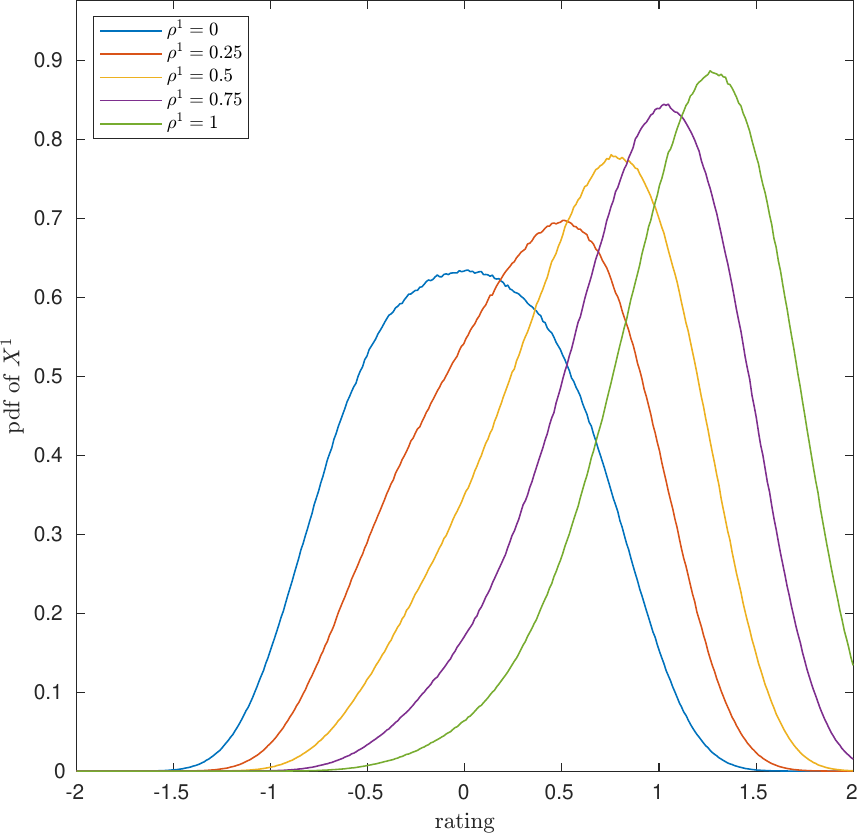}
		\caption{Monte Carlo approximation of the probability density function of $X^1$ for five different values of $\rho^1$. Parameters: $K=0.4$, $L=0.5$. Samples per curve: $m=5 \times 10^7$.} 
		\label{fig:rhos}
	\end{figure}
	
	For these parameters, the curves appear smooth and somewhat bell-shaped. The specific shape does seem to depend on $\rho^1$: the curves are not symmetric about their mean, except of course for $\rho^1 = 0$, and they become narrower as $\rho^1$ increases.

	\subsection{Density for different values of \texorpdfstring{$K$}{K}}
	\label{sec:numerics:density_K}
	
	Similarly, for $\rho^1=0$ and five different values of $K$ ranging from $0.02$ to $1.2$, we computed and plotted a normalized histogram with $m=5 \times 10^7$ samples per curve. The result is shown in Figure \ref{fig:Ks}. Observe that all the chosen values of $K$ satisfy the constrain $KL < 1$.
	
	\begin{figure}[t!]
		\centering
		\includegraphics[width=0.5\linewidth]{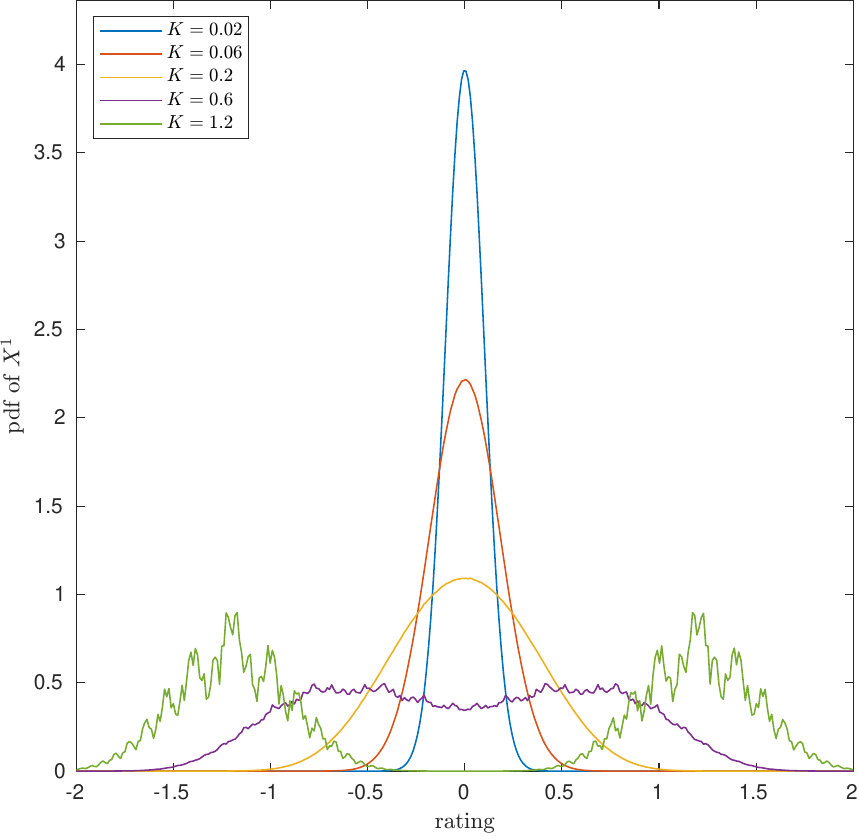}
		\caption{Monte Carlo approximation of the probability density function of $X^1$ for five different values of $K$. Parameters: $\rho^1=0$, $L=0.5$. Samples per curve: $m=5 \times 10^7$.} 
		\label{fig:Ks}
	\end{figure}
	
	Naturally, the curves are symmetric about $\rho^1 = 0$. As predicted by the results of Section \ref{sec:properties_stationary} (for instance Lemma \ref{lem:EXrhobXrho} and Theorem \ref{thm:stationary_sqrtK}), the curve is concentrated around $0$ for $K$ small, and it becomes more spread as $K$ increases. Interestingly, this spread is due to the emergence of two bumps, rather than a single bump becoming wider. Perhaps surprisingly, even though the curves appear to be smooth for $K$ small, they seem to become progressively rougher as $K$ increases. An interesting problem is to study mathematically the existence and smoothness/roughness of the density, as a function of the parameters.

	\subsection{Expected rating vs true skill}
	\label{sec:numerics:expected_rating}
	
	For $K=1$ and 101 linearly spaced values of $\rho^1$ ranging from $-1$ to $1$, we approximated $\EE[X^1]$ by its corresponding average, with $m=5 \times 10^5$ samples per point. The goal is to compare $\EE[X^1]$ with $\rho^1$. To do so, in Figure \ref{fig:expectation_rhos} we plotted $b(2\EE[X^1])$ as a function of $b(2 \rho^1)$. For reference, we also plotted the corresponding average that approximates the quantity $\EE[b(2X^1)]$.
	
	\begin{figure}[t!]
		\centering
		\includegraphics[width=0.5\linewidth]{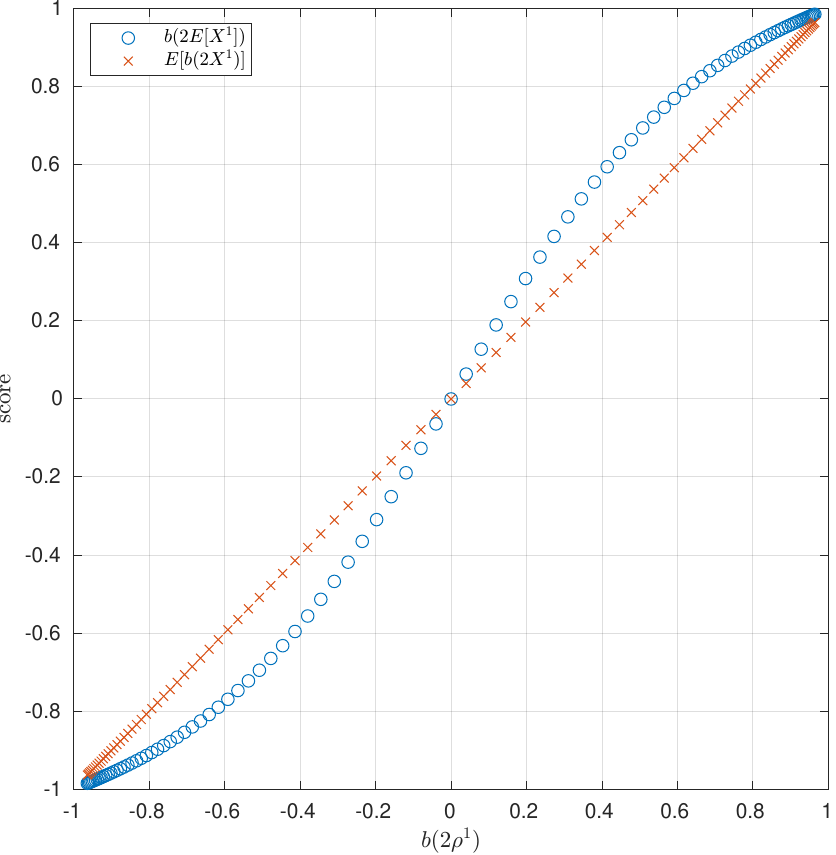}
		\caption{Monte Carlo estimation of $b(2\EE[X^1])$ and $\EE[b(2X^1)]$ for 101 values of $\rho^1$, plotted as functions of $b(2\rho^1)$. Parameters: $K=1$, $L=0.5$. Samples per point: $m=5 \times 10^5$.} 
		\label{fig:expectation_rhos}
	\end{figure}
	
	We observe that the plot of $b(2\EE[X^1])$ differs from the diagonal except at the origin, that is, $\EE[X^1] \neq \rho^1$ for all values of $\rho^1$ except $0$. This means that the rating is a biased estimator of the true skill in general, which is already well-known. Specifically, in this case $X^1$ seems to overestimate $\rho^1$ when it is positive, and underestimate it when it is negative. Let us mention that we chose the relatively large value $K=1$ (comparable to $1/L = 2$) so that the difference would be noticeable in the plot. For smaller values of $K$ the difference is more subtle, although still present; see for instance \cite{manCastillo-junca2024,manCastillo-junca2024b}. On the other hand, the plot of $\EE[b(2X^1)]$ falls right on the diagonal, which of course is to be expected because $b(2X^1)$ is an unbiased estimator of $b(2\rho^1)$ (Proposition \ref{prop:EbXiXj}).
	
	\subsection{Expected distance between rating and true skill vs \texorpdfstring{$K$}{K}}
	\label{sec:numerics:expectation_K}
	
	Similarly, for $\rho^1=0.5$ and 100 logarithmically spaced values of $K$ from $10^{-3}$ to $1$, we approximated $\EE[|X^1-\rho^1|]$ by its corresponding average, with $m=5 \times 10^4$ samples per point. Since some values of $K$ are very small, we needed to run the simulation for much longer in order to reach equilibrium. In Figure \ref{fig:expectation_Ks} we plotted $\EE[|X^1-\rho^1|]$ as a function of $K$, together with the curve $C\sqrt{K}$ for comparison, for $C$ such that both plots agree on the the first point. A log-log scale plot is also provided.
	
	\begin{figure}[t!]
		\centering
		\includegraphics[width=0.45\linewidth]{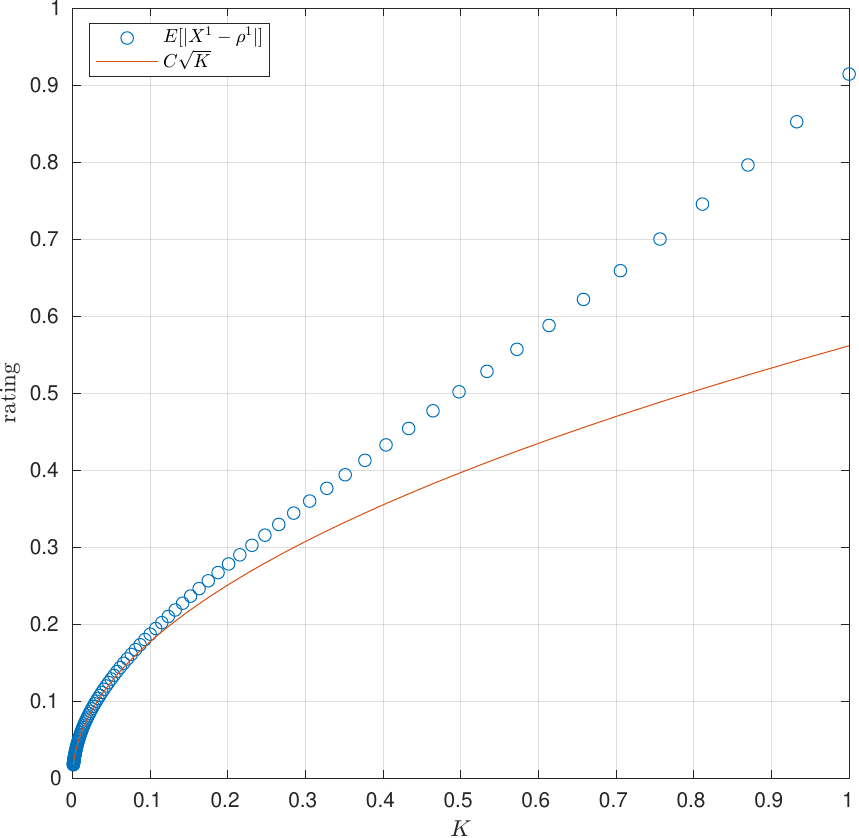}
		\includegraphics[width=0.45\linewidth]{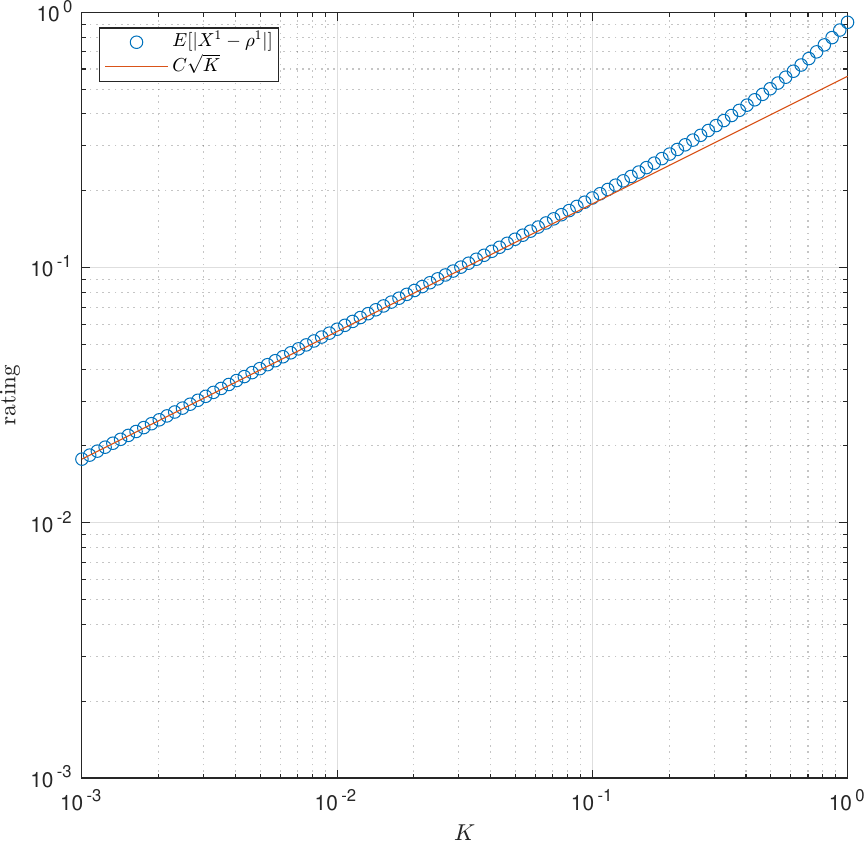}
		\caption{Monte Carlo estimation of $\EE[|X^1-\rho^1|]$ for 100 values of $K$, together with the function $C\sqrt{K}$. Right plot is in log-log scale. Parameters: $\rho^1=0.5$, $L=0.5$. Samples per point: $m=5 \times 10^4$.} 
		\label{fig:expectation_Ks}
	\end{figure}
	
	We observe that for $K$ small, $\EE[|X^1-\rho^1|]$ is approximated very well by the curve $C\sqrt{K}$, in agreement with Theorem \ref{thm:stationary_sqrtK}. This kind of behavior was also observed numerically in \cite{manCastillo-junca2024,manCastillo-junca2024b}, where the standard deviation of $X^1$ is seen to be of order $\sqrt{K}$ as well, for $K\ll 1/L$. For $K$ away from $0$, the increase in $\EE[|X^1-\rho^1|]$ seems to become linear, which suggests that the restriction of $K$ being smaller than some constant in Theorem \ref{thm:stationary_sqrtK} cannot be avoided. Overall, the plot agrees with the lower and upper bounds of Lemma \ref{lem:EXrhobXrho}, see also Remark \ref{rmk:lem:EXrhobXrho}.

	\section{Conclusion and perspectives}
	\label{sec:conclusion}
	
	In this article we studied the Elo process, a Markov chain taking values on the subset of $\RR^N$ of vectors with zero-sum, modeling the evolution of the ratings of $N$ players exchanging points according to the Elo rating algorithm. We provided several results about the large-time behavior of the process and its stationary distribution $\pi$, including exponential convergence to it. We also proved important properties of $\pi$, such as finiteness of an exponential moment, convergence to the Dirac mass at $\rho$ as the $K$-factor vanishes, and full support. We also performed Monte Carlo simulations that illustrate our results and offer new insights. Overall, the present article is a significant contribution towards a deeper understanding of the mathematical properties of Elo rating systems, in this relatively new topic in applied probability.
	
	Finally, let us mention some open questions and possible lines of future research that stem from our work and the relevant literature:
	\begin{itemize}
		\item \emph{Variable strengths.} As mentioned in the Introduction, more realistic variants of the model allow the the vector $\rho$ of true skills or strengths to be updated randomly after each game, as done in \cite{during-fisher-wolfram2022,during-torregrossa-wolfram2019} in a mean-field setting. Extending the results of the present article to the case variable strengths for finite $N$ and fixed $K>0$ would be highly desirable.
		
		\item \emph{Dependence of the constants on $N$}. The constants appearing in some of our results depend on all model parameters; in particular, they deteriorate as $N$ grows. The behavior of the optimal constants as functions of $N$ remains to be investigated. It is likely that some of them are even bounded uniformly in $N$, under suitable hypotheses on the behavior of $X_0$ and the vector $\rho$ as $N$ grows. This possibly includes the constant $\theta_0$ of the bounded exponential moment for $\pi$ in Theorem \ref{thm:stationary}-\ref{thm:stationary:i}, and the value $\kappa \in (0,1)$ giving the exponential rate of convergence in Theorem \ref{thm:main:geometric_convergence} (after properly rescaling time).
		
		\item \emph{Smoothness of $\pi$.} The numerical results of Sections \ref{sec:numerics:density_rho} and \ref{sec:numerics:density_K} suggest that $\pi$ has a density with respect to the Lebesgue measure of $\Z_N$. Even though this is clearly true if the scores have densities in $[-1,1]$, it is not obvious for binary scores. In the latter case, our numerical results indicate that the density, if it exists, appears to be smooth for $K$ small and becomes rougher as $K$ increases. It would be very desirable to have a deeper mathematical understanding of this observed phenomenon.
		
		\item \emph{Computation of the bias.} From simulations, we known that the ratings are biased estimators of the true skills $\rho$, but the actual value of $\EE[X]$ for $X \sim \pi$ remains unknown. Numerical evidence \cite{manCastillo-junca2024,manCastillo-junca2024b} indicates that $\EE[X]$ might not depend on $K$, but it definitely depends on $\rho$. In any case, it would be highly desirable to compute $\EE[X]$ explicitly, even for a specific instance of the scores and function $b(\cdot)$. This would allow one to adjust the ratings and obtain an unbiased estimator.
		
		\item \emph{Mean-field limit.} As mentioned in the Introduction, in \cite{jabin-junca2015} the authors propose a related model, corresponding to the mean-field limit of the Elo process as $N \to \infty$ and $K \to 0$ under a suitable scaling; see also \cite{during-fisher-wolfram2022,during-torregrossa-wolfram2019}. This leads to a kinetic-like partial differential equation modeling the evolution of the distribution of ratings in a large league, as is the case for many popular online games, which evolves according to the Elo rating algorithm with a very small $K$-factor. Alternatively, one could study the mean-field limit only as $N\to\infty$, keeping $K>0$ fixed. A challenging problem is to study this setting mathematically, proving that the mean-field limit is well defined, justifying its validity (propagation of chaos), and showing if and how the properties of the Elo process are reflected in the limit.
	\end{itemize}

	\subsection*{Acknowledgments}
	
	We would like to thank three anonymous referees for their useful suggestions, which allowed us to improve the presentation of the article, and encouraged us to further investigate the properties of the process, ultimately leading to a proof of the exponential convergence result.

	\bibliographystyle{plain}
	\bibliography{references.bib}{}
	
\end{document}